\newtheorem{claim}{Claim}
\newtheorem{theorem}{Theorem}
\newtheorem{lemma}[theorem]{Lemma}
\newtheorem{corollary}[theorem]{Corollary}
\newtheorem{proposition}[theorem]{Proposition}
\newtheorem{conjecture}[theorem]{Conjecture}
\newenvironment{proofclaim}[1][]%
    {\noindent \emph{Proof.} {}{#1}{}}{$~$\hfill $~\blacklozenge$ \vspace{0.2cm}}
\newcommand{\set}[1]{\ensuremath{\left\{#1 \right\}}}
\newcommand{\Sz}{\hbox{\rm Sz}\,}
\begin{document}

\title{On the difference between the Szeged \\and Wiener index}

\author
{
  Marthe Bonamy\thanks{University of Waterloo, Canada. 
    E-Mail: \texttt{mbonamy@uwaterloo.ca}}, \quad
  Martin Knor\thanks{Slovak University of Technology in Bratislava, 
    Faculty of Civil Engineering, Department of Mathematics, Bratislava, Slovakia. 
    E-Mail: \texttt{knor@math.sk}}, \quad
  Borut Lu\v{z}ar\thanks{Faculty of Information Studies, Novo mesto, Slovenia. E-Mail: \texttt{borut.luzar@gmail.com}},\\
  Alexandre Pinlou\thanks{Universit\'e Montpellier 3, CNRS, LIRMM, Montpellier, France. 
    E-Mail: \texttt{alexandre.pinlou@lirmm.fr}},\quad
  Riste \v{S}krekovski\thanks{Faculty of Information Studies, Novo mesto \& FMF, University of Ljubljana \&
    FAMNIT, University of Primorska, Koper, Slovenia. E-Mail: \texttt{skrekovski@gmail.com}}
}

\maketitle

{
  \begin{abstract}
    \noindent
    We prove a conjecture of Nadjafi-Arani, Khodashenas and Ashrafi on the difference between the Szeged and Wiener index of a graph.
    Namely, if $G$ is a 2-connected non-complete graph on $n$ vertices, then $\Sz(G)-W(G)\ge 2n-6$.
    Furthermore, the equality is obtained if and only if $G$ is the complete graph $K_{n-1}$ with an extra vertex attached to
    either $2$ or $n-2$ vertices of $K_{n-1}$.
    We apply our method to strengthen some known results on the difference between 
    the Szeged and Wiener index of bipartite graphs, graphs of girth at least five, and the difference between 
    the revised Szeged and Wiener index. We also propose a stronger version of the aforementioned conjecture.
  \end{abstract}
}

\bigskip
{\noindent\small \textbf{Keywords:} Wiener index, Szeged index, revised Szeged index, Szeged--Wiener relation}


\section{Introduction}

Graph theoretic invariants of molecular graphs, which predict properties
of the corresponding molecules, are known as topological indices or molecular descriptors.
The oldest and most studied topological index is the Wiener index introduced
in 1947 by Wiener~\cite{Wie47}, who observed that this invariant can be used for
predicting the boiling points of paraf\mbox f\mbox ins.
For a simple graph $G=(V,E)$, the Wiener index is defined as 
$$
W(G)=\sum_{\{a, b\} \subseteq V} d(a,b),
$$ 
i.e. the sum of distances between all pairs of vertices. After 1947, 
the same quantity has been studied and referred to by mathematicians as the gross status \cite{Har59}, the distance of graphs \cite{EntJacSny76}, and the transmission \cite{Sol91}.
A great deal of knowledge on Wiener index is accumulated in several survey papers, see e.g. \cite{KnoSkr14,XuLiuDasGutFur14} for more recent ones.

Up to now, over 200 topological indices were introduced as potential molecular descriptors.
The definition of Szeged index in~\cite{Gut94,KhaDesKal95} was motivated
by the original definition of Wiener index for trees.
It is defined as
$$
\Sz(G)= \sum_{ab \in E} n_{ab}(a)\cdot n_{ab}(b),
$$ 
where $n_{ab}(a)$ is the number of vertices strictly closer to $a$ than $b$, and analogously, $n_{ab}(b)$ is the number of vertices strictly closer to $b$. 
Note that $n_{ab}(a)$ and $n_{ab}(b)$ are always positive. 

In this paper we consider possible values of the difference
$$
\eta(G) = \Sz(G) - W(G)
$$
between the Szeged and the Wiener index of a graph $G$. Klav\v zar et al.~\cite{KlaRajGut96} proved that the
inequality $\eta(G) \ge 0$ holds for every connected graph $G$. Moreover, Dobrynin and Gutman~\cite{DobGut95}
showed that the equality is achieved if and only if $G$ is a \textit{block graph}, i.e. a graph in which every block (maximal $2$-connected subgraph) induces a clique.
Nadjafi-Arani et al.~\cite{NadKhoAsh11,NadKhoAsh12} further investigated
the properties of $\eta(G)$ and proved that for every positive integer $k$,
with 
$k \notin \{1,3\}$, there exists a graph $G$ with $\eta(G) = k$.
Additionally, they classified the graphs $G$ for which
$\eta(G) \in \{2,4,5\}$ and asked about a general classification;
namely, can we characterize all graphs with a given value of $\eta(G)$?  They
proposed the following conjecture.

\begin{conjecture}[Nadjafi-Arani et al., 2012]
 \label{conj:main}
 Let $G$ be a connected graph and let $B_1,\ldots,B_k$ be all its non-complete
 blocks of respective orders $n_1,\ldots,n_k$.
 Then
 $$
 	\eta(G)\ge \sum_{i=1}^k (2n_i-6).
 $$
\end{conjecture}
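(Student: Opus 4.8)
The plan is to recast $\eta$ as a sum of nonnegative contributions indexed by pairs of vertices, and then to split these contributions block by block so that the statement reduces to the $2$-connected case. For the first step, call an edge $ab$ of a connected graph $H$ \emph{separating} for a pair $\{x,y\}$ if, after possibly swapping $a$ and $b$, one has $d(x,a)<d(x,b)$ and $d(y,b)<d(y,a)$. Reading $n_{ab}(a)\cdot n_{ab}(b)$ as the number of unordered pairs with one vertex strictly closer to $a$ and one strictly closer to $b$, and exchanging the order of summation, gives
$$
\Sz(H)=\sum_{\{x,y\}}\bigl|\{\,e\in E(H): e \text{ separates } x,y\,\}\bigr|.
$$
Every edge of a shortest $x$--$y$ path is separating for $\{x,y\}$, so the number of separating edges is at least $d(x,y)$. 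Writing $\sigma_H(x,y)$ for the number of separating edges minus $d(x,y)$, we obtain $\sigma_H(x,y)\ge 0$ together with the localisation
$$
\eta(H)=\sum_{\{x,y\}}\sigma_H(x,y),
$$
which in particular reproves $\eta\ge 0$.

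Next I would establish additivity of $\sigma$ across cut vertices. If $v$ is a cut vertex and $x,y$ lie in different branches, then a shortest $x$--$y$ path passes through $v$, so $d(x,y)=d(x,v)+d(v,y)$; moreover an edge in the branch of $x$ separates $\{x,y\}$ exactly when it separates $\{x,v\}$ (since $y$ sees that edge only through $v$), and symmetrically on the other side. Hence $\sigma(x,y)=\sigma(x,v)+\sigma(v,y)$. Iterating along the block--cut tree shows that, for every pair $\{x,y\}$, the quantity $\sigma_G(x,y)$ equals the sum of the within-block quantities $\sigma_B(p_B,q_B)$ over the blocks $B$ that the $x$--$y$ path traverses, where $p_B,q_B$ are the ports at which the path enters and leaves $B$. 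Because blocks are isometric subgraphs of $G$, each $\sigma_B$ appearing here is the intrinsic quantity of the standalone graph $B$.

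Summing over all pairs and grouping by block, let $w_B(p)$ be the number of vertices of $G$ whose route to $B$ enters at the port $p\in V(B)$. A pair $\{x,y\}$ contributes $\sigma_B(p,q)$ to $B$ for exactly $w_B(p)\,w_B(q)$ choices of $\{x,y\}$, so
$$
\eta(G)=\sum_{B}\ \sum_{\{p,q\}\subseteq V(B)} w_B(p)\,w_B(q)\,\sigma_B(p,q).
$$
Every branch contains at least its own port, so $w_B(p)\,w_B(q)\ge 1$, and all $\sigma_B\ge 0$; lowering the weights to $1$ and using $\sum_{\{p,q\}}\sigma_B(p,q)=\eta(B)$ gives $\eta(G)\ge\sum_B\eta(B)$. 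Complete blocks (single edges included) satisfy $\eta=0$, so only the non-complete blocks survive, and the conjecture follows once we know $\eta(B_i)\ge 2n_i-6$ for each such block.

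The remaining step is therefore the $2$-connected case, namely that a non-complete $2$-connected graph $B$ on $m\ge 4$ vertices satisfies $\eta(B)\ge 2m-6$ (the main theorem announced in the abstract), and I expect this to be the main obstacle. Via the same localisation $\eta(B)=\sum_{\{x,y\}}\sigma_B(x,y)$, the natural route is to exhibit enough pairs with positive $\sigma$: for a pair at distance $2$ the two edges of one common-neighbour path are separating, while $2$-connectivity yields an internally disjoint second $x$--$y$ path contributing further separating edges, forcing $\sigma\ge 1$. The real difficulty is to aggregate $\Theta(m)$ such contributions into the clean bound $2m-6$ and to determine exactly when equality holds. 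A plausible alternative is induction on $m$, deleting a vertex whose removal preserves $2$-connectedness and controlling the decrease of $\eta$; guaranteeing such a vertex and bounding the decrease is then the delicate point.
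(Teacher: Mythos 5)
Your reduction of the conjecture to the $2$-connected case is correct, and it is essentially the route the paper takes: the paper's proof of this statement (Corollary~\ref{th:main2}) observes that blocks are isometric, that good edges for a pair inside a block are the same computed in the block or in $G$, and that all the remaining terms $\eta(a,b)=g(a,b)-d(a,b)$ are nonnegative, which already yields $\eta(G)\ge\sum_i\eta(B_i)$. Your weighted block--cut-tree identity $\eta(G)=\sum_B\sum_{\{p,q\}}w_B(p)w_B(q)\sigma_B(p,q)$ is a valid refinement of this (the cut-vertex additivity $\sigma(x,y)=\sigma(x,v)+\sigma(v,y)$ checks out, since an edge in a branch not containing $x$ or $y$ is never good for $\{x,y\}$), but it buys nothing extra here, since you immediately discard the weights.

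The genuine gap is that you never prove the statement you reduce to, namely that every $2$-connected non-complete graph $B$ on $m$ vertices satisfies $\eta(B)\ge 2m-6$. This is Theorem~\ref{th:main1}, and it is where all of the paper's work lies (Sections~\ref{sec:lemmas} and~\ref{sec:main}). Your sketch for it does not survive scrutiny: for a pair $\{x,y\}$ at distance $2$ the two edges through a common neighbour account only for $d(x,y)=2$ good edges, so they give $\sigma\ge 0$, not $\sigma\ge 1$; a second internally disjoint path need not contribute good edges when it is long; and even granting $\sigma\ge 1$ for all distance-$2$ pairs, the extremal graph $K_n^2$ has only $n-3$ such pairs, so that count cannot reach $2n-6$. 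The paper instead runs a delicate induction: if some vertex $u$ has $N[u]\subseteq N[v]$ and $G-u$ stays $2$-connected and non-complete, then $\eta(G)-\eta(G-u)\ge 2$ (Lemma~\ref{lem:induction}, with Lemma~\ref{lem:K2n} controlling equality); if no such deletion is available it analyses the block structure of $G-u$ (Lemma~\ref{lem:nopartialtruetwins}) and recurses on the pieces; and if no dominated vertex exists at all, a breadth-first-search level argument shows every vertex contributes at least $4$, giving $\eta(G)\ge 2n$ outright (Lemma~\ref{lem:disconnectivity}). Without an argument of comparable substance for the $2$-connected case, your proposal establishes only the (easy) block decomposition step, not the conjecture.
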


In this paper we prove the following statement which deals with 2-connected
graphs.

\begin{theorem}
 \label{th:main1}
 If $G$ is a $2$-connected non-complete graph on $n$ vertices, then 
 $$
 	\eta(G) \geq 2n-6.
 $$
\end{theorem}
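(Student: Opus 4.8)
The plan is to recast $\eta(G)$ as a sum of nonnegative per-pair contributions and then to bound that sum from below. For an unordered pair $\{a,b\}$, say that an edge $uv$ \emph{separates} $\{a,b\}$ if one of $a,b$ is strictly closer to $u$ and the other strictly closer to $v$, and write $s(a,b)$ for the number of such edges. A single edge $uv$ separates precisely $n_{uv}(u)\cdot n_{uv}(v)$ pairs, so summing over edges gives $\sum_{\{a,b\}} s(a,b)=\Sz(G)$. On the other hand, the $d(a,b)$ edges of any fixed shortest $a$--$b$ path all separate $\{a,b\}$, so $s(a,b)\ge d(a,b)$, and summing over pairs recovers $\sum_{\{a,b\}} d(a,b)=W(G)$. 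Hence
$$
\eta(G)=\sum_{\{a,b\}}\bigl(s(a,b)-d(a,b)\bigr),
$$
a sum of nonnegative terms. It is convenient to record the characterization that, setting $\phi_{ab}(w)=d(a,w)-d(b,w)$, an edge $uv$ separates $\{a,b\}$ exactly when $|\phi_{ab}(u)-\phi_{ab}(v)|=2$; thus the excess $s(a,b)-d(a,b)$ measures the separating edges beyond those on a single geodesic.

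It then suffices to show that the total excess is at least $2n-6$. A first computation guides the bound: a pair at distance $2$ with $c$ common neighbours already has $2c$ incident separating edges, hence excess at least $2(c-1)$, and on the extremal graph $K_{n-1}$ with a vertex $x$ joined to $v_1,v_2$ the entire excess is carried by the $n-3$ pairs $\{x,v_j\}$, each contributing exactly $2$. However, distance-$2$ contributions alone do not suffice in general: on a cycle, or on a cycle with a single chord, every such pair has a unique common neighbour, and the excess is instead produced by a single ``far'' separating edge per pair (an edge with $|\Delta\phi_{ab}|=2$ on the opposite side). So I would not try to localize the bound, but exploit $2$-connectivity globally: every cut $\{uv:\phi_{ab}(u)<t<\phi_{ab}(v)\}$ at a half-integer level $t\in(-d(a,b),d(a,b))$ separates $a$ from $b$ and hence has size at least $2$, which forces additional separating edges and is the mechanism responsible for the extra excess on (near-)cycles.

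Concretely, I would argue by induction on $n$. The base case $n=4$ and the family of cycles $C_n$ are checked directly (for $C_n$ both indices are read off the distance profile and $\eta(C_n)$ grows cubically, comfortably exceeding $2n-6$). For the inductive step, when $G$ is not a cycle I would delete a carefully chosen vertex $v$ so that $G-v$ is still $2$-connected and non-complete and, crucially, \emph{isometric} in $G$, so that no distance among the remaining vertices changes; a degree-$2$ vertex whose two neighbours are adjacent, or more generally a simplicial-type vertex, is the natural candidate. For such a $v$, every old pair keeps its distance and loses no separating edge, while the edges newly incident to $v$ can only add separations, so its excess in $G$ is at least its excess in $G-v$; consequently
$$
\eta(G)-\eta(G-v)\ \ge\ \sum_{w\neq v}\bigl(s(v,w)-d(v,w)\bigr),
$$
and the task reduces to showing that this contribution of the new pairs is at least $2$, after which induction yields $\eta(G)\ge 2(n-1)-6+2=2n-6$. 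The main obstacle is exactly the existence and choice of such a deletable vertex: cycles obstruct every $2$-connectivity-preserving deletion (hence are handled as a base family), and deleting a vertex whose two neighbours are non-adjacent may shorten distances in $G-v$ and thereby spoil the monotonicity of the excess. Controlling this---equivalently, using the ear structure to locate a vertex whose reinsertion neither creates a shortcut nor destroys $2$-connectivity, and then pinning the new pairs' contribution to exactly $2$ via the cut bound above---is where the real work lies.
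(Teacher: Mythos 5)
Your setup is exactly the paper's: your ``separating edges'' are its \emph{good edges}, the per-pair excess $s(a,b)-d(a,b)$ is its $\eta(a,b)$, and your inductive step of deleting a vertex $u$ with $N[u]\subseteq N[v]$ (so that $G-u$ is isometric) and showing the new pairs contribute at least $2$ is precisely its Lemma~\ref{lem:induction}. The problem is that the two cases you defer as ``where the real work lies'' are not loose ends but the bulk of the proof, and your description of them is not accurate. First, the graphs admitting no pair $u,v$ with $N[u]\subseteq N[v]$ are far from being just cycles: every triangle-free $2$-connected graph is of this type (a vertex $v$ with $N[u]\subseteq N[v]$ would force a triangle through $u$ and $v$ unless $d(u)=1$), as are many graphs with triangles. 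So cycles cannot serve as the base family; the paper instead proves (Lemma~\ref{lem:disconnectivity}, via a breadth-first-search level analysis with several claims) that any such graph other than $C_5$ satisfies $c(u)\ge 4$ for every vertex and hence $\eta(G)\ge 2n$, which is a genuinely separate argument your sketch does not contain.

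Second, when a dominated vertex $u$ exists but every such deletion destroys $2$-connectivity (and $G$ is not a cycle --- e.g.\ two non-complete blocks of $G-u$ hanging off the cut-vertex $v$, reattached through $u$), your induction simply does not apply, and no isometric, connectivity-preserving deletion need exist. The paper handles this by a different decomposition: it shows (Lemma~\ref{lem:nopartialtruetwins}) that then $v$ is the unique cut-vertex of $G-u$ and no block of $G-u$ is complete or of the exceptional form $K_t^2$, $K_t^{t-2}$, applies the induction hypothesis to each $G_i=G[V(C_i)\cup\{u\}]$, and recovers the losses from overcounting $u$ and $v$ by summing cross-block contributions $g(w_i,w_j)-d(w_i,w_j)\ge 2$. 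Without this branch, and without the correct treatment of the no-dominated-vertex case, the induction as you propose it cannot be closed; you have reproduced the paper's framework and its easy lemma, but the two hard lemmas are missing.
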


As a consequence of Theorem~\ref{th:main1} we obtain that Conjecture~\ref{conj:main}
is true.

\begin{corollary}
 \label{th:main2}
 Let $B_1,\ldots,B_k$ be all the non-complete blocks of $G$ with respective
 orders $n_1,\ldots,n_k$.
 Then 
 $$
 	\eta(G)\ge \sum_{i=1}^k (2n_i-6).
 $$
\end{corollary}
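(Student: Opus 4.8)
The plan is to deduce the corollary from Theorem~\ref{th:main1} by showing that $\eta$ decomposes, block by block, into a sum of nonnegative contributions, each dominating the value of $\eta$ on the corresponding block taken in isolation. The first step is to rewrite $\eta$ as a sum over vertex pairs. For an edge $e=uv$ let $N_u(e)$ and $N_v(e)$ denote the sets of vertices strictly closer to $u$, respectively to $v$; then $\Sz(G)=\sum_{e}|N_u(e)|\,|N_v(e)|=\sum_{\{x,y\}}c(x,y)$, where $c(x,y)$ is the number of edges $e=uv$ \emph{separating} $\{x,y\}$, meaning one of $x,y$ lies in $N_u(e)$ and the other in $N_v(e)$. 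Since $W(G)=\sum_{\{x,y\}}d(x,y)$, this yields $\eta(G)=\sum_{\{x,y\}}\bigl(c(x,y)-d(x,y)\bigr)$, and each summand is nonnegative because the $d(x,y)$ edges of any shortest $x$--$y$ path all separate $\{x,y\}$ (this is essentially the argument of Klav\v zar et al.\ for $\eta\ge 0$).

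The second step exploits the block structure. For a block $B$ and a vertex $w$, let $g_B(w)$ be the vertex of $B$ through which every path from $w$ first reaches $B$ (so $g_B(w)=w$ when $w\in B$); blocks are isometric, and for every $z\in B$ one has $d(w,z)=d(w,g_B(w))+d_B(g_B(w),z)$. From this additive formula I would derive two facts: that $\{x,y\}$ separates an edge $e\in B$ if and only if $\{g_B(x),g_B(y)\}$ separates $e$ inside $B$, and that $d(x,y)=\sum_B d_B(g_B(x),g_B(y))$. Because each edge lies in exactly one block, summing the separation count over blocks gives
$$\eta(G)=\sum_{B}\ \sum_{\{x,y\}}\bigl(c_B(x,y)-d_B(g_B(x),g_B(y))\bigr),$$
where $c_B(x,y)$ counts the edges of $B$ separated by $\{x,y\}$.

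The third step is to regroup the pairs according to their gates. Setting $s_p=|\{w:g_B(w)=p\}|\ge 1$ for each $p\in V(B)$, both $c_B$ and $d_B$ depend only on the gates and vanish when the two gates coincide, so the inner sum over pairs equals $\sum_{\{p,q\}\subseteq V(B)} s_p\,s_q\,\bigl(c_B(p,q)-d_B(p,q)\bigr)$. Here every weight satisfies $s_p s_q\ge 1$ and every term satisfies $c_B(p,q)-d_B(p,q)\ge 0$, whence the inner sum is at least $\sum_{\{p,q\}}\bigl(c_B(p,q)-d_B(p,q)\bigr)=\eta(B)$. Therefore $\eta(G)\ge\sum_B\eta(B)$. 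Finally, a complete block is a block graph and contributes $\eta(B)=0$, while each non-complete block is $2$-connected and non-complete, so Theorem~\ref{th:main1} gives $\eta(B_i)\ge 2n_i-6$; summing over the non-complete blocks proves the claimed bound.

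The main obstacle is the gate lemma at the heart of the second step: one must prove carefully that whether a pair separates an edge of $B$ is governed solely by the gates $g_B(x),g_B(y)$, and establish the additive distance formula $d(x,y)=\sum_B d_B(g_B(x),g_B(y))$. Both rest on the standard but delicate fact that every path from outside $B$ into $B$ passes through a unique cut vertex and that blocks are isometric. Once these structural facts are secured, the remainder — the pair reformulation, the regrouping by gates, and the termwise estimate $s_ps_q\ge 1$ with $c_B\ge d_B$ — is routine bookkeeping.
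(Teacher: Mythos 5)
Your proposal is correct and follows essentially the same route as the paper: rewrite $\eta(G)=\sum_{\{x,y\}}\bigl(g(x,y)-d(x,y)\bigr)$ via good (separating) edges, observe that each summand is nonnegative, note that for a pair lying inside a block these quantities agree with their values computed in that block alone, and apply Theorem~\ref{th:main1} to each non-complete block. The gate analysis in your second and third steps is sound but more than is needed --- the paper simply discards all cross-block pairs using $g(x,y)\ge d(x,y)$ and keeps only the pairs contained in a single block.
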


In fact, we also characterize the graphs achieving equality in Theorem~{\ref{th:main1}}.
For $n, t \in \mathbb{N}$ with $1\le t\leq n-1$, let $K^t_n$ be the graph
obtained from $K_{n-1}$ by adding one new vertex adjacent to $t$ of the $n-1$ old vertices.
Observe that $K^t_n$ is $2$-connected and non-complete if $2\le t\le n-2$.
We prove the following stronger version of Theorem~\ref{th:main1}.

\begin{theorem}
\label{th:main3}
If $G$ is a $2$-connected non-complete graph on $n$ vertices that is
not isomorphic to $K^2_n$ or $K^{n-2}_n$, then 
$$
	\eta(G) \geq 2n-5.
$$
\end{theorem}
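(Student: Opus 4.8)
The starting point is the standard reformulation of $\eta$ as a sum of nonnegative per-pair contributions. For an edge $ab$ and vertices $u,v$, say that $ab$ \emph{splits} $\{u,v\}$ if one of $u,v$ is strictly closer to $a$ and the other strictly closer to $b$, and let $s(u,v)$ denote the number of edges splitting $\{u,v\}$. Grouping each product $n_{ab}(a)\cdot n_{ab}(b)$ by the pair it records, and noting that the $d(u,v)$ edges of any fixed shortest $u$--$v$ path all split $\{u,v\}$, one gets
\[
\eta(G)=\sum_{\{u,v\}\subseteq V}\bigl(s(u,v)-d(u,v)\bigr),
\]
a sum of nonnegative terms since $s(u,v)\ge d(u,v)$. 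The whole task becomes lower-bounding this total excess and, crucially, understanding when it is tight.

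Since Theorem~\ref{th:main1} already gives $\eta(G)\ge 2n-6$ and $\eta(G)$ is an integer, proving Theorem~\ref{th:main3} is equivalent to showing that $\eta(G)=2n-6$ forces $G\cong K^2_n$ or $G\cong K^{n-2}_n$. I would therefore revisit the argument behind Theorem~\ref{th:main1}, assume throughout that the excess equals exactly $2n-6$, and extract structural consequences from the resulting tightness. As $G$ is non-complete, fix a pair $u,v$ at distance $2$ and let $C=N(u)\cap N(v)\neq\emptyset$. For each $c\in C$ both edges $uc$ and $vc$ split $\{u,v\}$, so $s(u,v)\ge 2|C|$ and this single pair already contributes at least $2|C|-2$ to the excess.

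This inequality isolates the two extremal mechanisms. Either some distance-$2$ pair has $|C|=n-2$ (the situation of $K^{n-2}_n$, where this one pair accounts for all of $2n-6$ and everything else must have zero excess), or every distance-$2$ pair has $|C|\le n-3$, in which case no single pair can carry the full excess and it must instead be distributed over many pairs (the situation of $K^2_n$, where $n-3$ pairs each contribute exactly $2$). I would run these as two cases. In the first, $u$ and $v$ are adjacent to all of $W:=V\setminus\{u,v\}$, and I would show that any non-edge inside $W$ creates a further non-adjacent pair with a common neighbourhood containing $u$ and $v$, hence extra excess; so $W$ must be a clique and $G\cong K^{n-2}_n$. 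In the second, I would use $2$-connectivity (internally disjoint $u$--$v$ paths via Menger) to force enough vertices onto short detours to guarantee many splitting edges, and argue that matching $2n-6$ is possible only in the rigid configuration $K^2_n$.

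The main obstacle is precisely this rigidity step: converting equality in a sum of many nonnegative terms into an isomorphism, which requires squeezing every term to its minimum simultaneously and ruling out every alternative source of excess. Two sources are delicate. First, large diameter: as $C_5$ shows, an edge far from a pair can still split it (in $C_5$ each non-adjacent pair picks up one such extra splitting edge, giving $\eta=2n-5$), so I must prove that $\eta(G)=2n-6$ forces diameter $2$ in order to eliminate these long-range contributions. Second, the dense case: under diameter $2$ the equidistant sets and private neighbourhoods must be so constrained that avoiding any extra split forces all but one vertex to be mutually adjacent, i.e.\ forces $G$ to be $K_{n-1}$ with one extra vertex of degree $2$ or $n-2$. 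Assembling these constraints into a clean proof that no other graph meets the bound, while keeping the count tight enough to separate $2n-6$ from $2n-5$, is where the real work lies.
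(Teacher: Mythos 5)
Your reduction of Theorem~\ref{th:main3} to a characterization of equality in Theorem~\ref{th:main1} is logically sound in isolation (since $\eta$ is an integer, $\eta(G)\ge 2n-6$ together with ``equality only for $K^2_n$ and $K^{n-2}_n$'' does give $\eta(G)\ge 2n-5$ for all other graphs), but it is circular relative to this paper: here Theorem~\ref{th:main1} is \emph{derived from} Theorem~\ref{th:main3} via Lemma~\ref{lem:eta(KK)}, and no independent proof of the $2n-6$ bound exists to lean on. You would first have to supply a standalone proof of Theorem~\ref{th:main1}, which is essentially as hard as the problem itself. The paper instead proves Theorem~\ref{th:main3} directly by a minimal-counterexample argument: if some vertex $u$ is dominated ($N[u]\subseteq N[v]$) and $G-u$ stays $2$-connected and non-complete, Lemmas~\ref{lem:induction} and~\ref{lem:K2n} give $\eta(G)\ge\eta(G-u)+2$ and control the extremal cases; if no dominated pair exists at all, Lemma~\ref{lem:disconnectivity} gives $\eta(G)\ge 2n$ outright; and in the remaining case Lemma~\ref{lem:nopartialtruetwins} forces $G-u$ to decompose into blocks through $v$, none complete or of the form $K^2_t$ or $K^{t-2}_t$, after which the bound follows by summing the inductive estimates over the subgraphs $G_i=G[V(C_i)\cup\{u\}]$ together with the cross-block good edges.

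Beyond the circularity, the substantive content of your proposal is incomplete. Your Case 1 (some distance-$2$ pair $u,v$ with $|N(u)\cap N(v)|=n-2$) is plausibly finishable along the lines you sketch: any non-edge inside $V\setminus\{u,v\}$ yields a second pair of positive excess. But Case 2 --- showing that when every distance-$2$ pair has at most $n-3$ common neighbours, a total excess of exactly $2n-6$ forces $G\cong K^2_n$ --- is precisely the hard part, and you offer only the intention to ``use $2$-connectivity to force enough splitting edges.'' You also assert, without proof, that $\eta(G)=2n-6$ forces diameter $2$; this is needed to kill long-range contributions (the $C_5$ phenomenon you mention) and is not obvious. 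As you yourself acknowledge, assembling these constraints into a proof that no other graph meets the bound ``is where the real work lies'': that work is the proof, and it is missing.
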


While $\eta(K_n) = 0$, and $\eta(K_n^{2}) = \eta(K_n^{n-2}) = 2n-6$ (see Lemma~\ref{lem:eta(KK)} for the proof),
there seems to be only a finite number of graphs $G$ with $\eta(G) < 2n$; in particular, using a computer, 
we found such graphs of order at most $9$, but none on $10$ vertices. We therefore propose
the following conjecture.

\begin{conjecture}
	Let $G$ be a $2$-connected graph of order $n \ge 10$ not isomorphic to $K_n$, $K_n^{2}$, or $K_n^{n-2}$. 
	Then 
	$$
		\eta(G) \ge 2n.
	$$
\end{conjecture}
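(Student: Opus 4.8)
The plan is to build on Theorem~\ref{th:main3}, which already gives $\eta(G)\ge 2n-5$ for every $2$-connected non-complete $G$ other than $K_n^2$ and $K_n^{n-2}$. Since $K_n$ is complete, proving the conjecture amounts to ruling out $\eta(G)\in\{2n-5,2n-4,2n-3,2n-2,2n-1\}$ for all $2$-connected $G$ with $n\ge 10$ outside the three listed graphs. The basic tool is the pairwise decomposition $\eta(G)=\sum_{\{x,y\}}\mu(x,y)$, where $\mu(x,y)=c(x,y)-d(x,y)\ge 0$ and $c(x,y)$ is the number of edges $ab$ separating $x$ and $y$ (one of $x,y$ strictly closer to $a$, the other strictly closer to $b$); the inequality $\mu\ge 0$ holds because the edges of any shortest $x$--$y$ path all separate $x$ and $y$. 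The aim is to show that, for $n\ge 10$, the only configurations keeping $\sum_{\{x,y\}}\mu(x,y)$ below $2n$ are exactly $K_n^2$ and $K_n^{n-2}$.

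First I would show that a small value of $\eta$ forces $G$ to be dense and of diameter $2$. If $d(x,y)\ge 3$ for some pair, then by $2$-connectivity there are two internally disjoint $x$--$y$ paths; this produces a second family of separating edges in addition to those of a shortest path, so $\mu(x,y)$ is already sizeable, and moreover the internal vertices of a long geodesic generate many further pairs at distance $\ge 2$, each with positive $\mu$. I would quantify this to conclude that $\diam{G}=2$ whenever $\eta(G)\le 2n-1$ and $n$ is large. Once $\diam{G}=2$, every non-adjacent pair $\{x,y\}$ satisfies $d(x,y)=2$, and a short computation shows $c(x,y)\ge 2\,|N(x)\cap N(y)|$, so that $\mu(x,y)\ge 2\,|N(x)\cap N(y)|-2$. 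Summed over all non-edges, this simultaneously penalizes any non-edge whose endpoints share many neighbors and penalizes having several non-edges, which is the mechanism that isolates the two extremal shapes.

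The two extremal graphs sit at opposite ends of this estimate: $K_n^{n-2}$ has a single non-edge whose endpoints have $n-2$ common neighbors, contributing $2n-6$ all at once, whereas $K_n^2$ has $n-3$ non-edges each contributing exactly $2$. To finish I would classify the \emph{cheap} configurations lying between these extremes. For the one-parameter family $K_n^t$ a direct computation gives $\eta(K_n^t)=2t(n-t)-2(n-1)$, which is at least $2n$ for every $t$ with $3\le t\le n-3$ already when $n\ge 8$; thus within this family only $t\in\{2,n-2\}$ fall below $2n$. The remaining task is to prove that any $2$-connected diameter-$2$ graph that is not of the form $K_n^t$ can be reached from some $K_n^t$ by adding or deleting a bounded number of edges near its non-edges, and that each such modification raises $\eta$ by at least the required amount once $n\ge 10$.

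I expect the main obstacle to be precisely the intermediate regime, in which $G$ has two or three non-edges with small common neighborhoods. There the contributions are individually moderate, so the crude bound $\mu(x,y)\ge 2\,|N(x)\cap N(y)|-2$ is not by itself strong enough, and one must account for the interaction between distinct non-edges (shared separating edges and mixed private-neighbor edges) in order to recover the missing five units. This is also exactly the regime where the sporadic graphs on at most $9$ vertices live, so any successful argument must genuinely exploit $n\ge 10$; the natural route is a careful charging scheme over the non-edges, with the finitely many base cases $n\le 9$ discharged by the computer search already mentioned in the text.
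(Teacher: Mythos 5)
The statement you are addressing is not a theorem of the paper: it is an open conjecture, supported there only by a computer search (graphs with $\eta(G)<2n$ were found up to order $9$, none on $10$ vertices), so there is no paper proof to compare against, and your text should be judged purely as a proposed proof. As such, it is a research plan rather than a proof, and the two steps that would constitute its mathematical content are both missing. First, the diameter reduction is only announced (``I would quantify this to conclude that $\mathrm{diam}(G)=2$ whenever $\eta(G)\le 2n-1$''): you give no quantitative lower bound on $\mu(x,y)$ for pairs at distance $\ge 3$, and the heuristic that internal vertices of a long geodesic ``generate many further pairs, each with positive $\mu$'' is unsupported, since in general a pair at distance $\ge 2$ only guarantees $\mu\ge 0$; extracting positivity requires exactly the kind of structural case analysis the paper carries out via good edges in Lemmas~\ref{lem:induction} and~\ref{lem:disconnectivity}. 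Second, the final step is essentially circular: the claim that every $2$-connected diameter-$2$ graph with small $\eta$ ``can be reached from some $K_n^t$ by adding or deleting a bounded number of edges'' is itself the heart of the conjecture, is nowhere argued, and is false as stated for the full diameter-$2$ class (consider dense diameter-$2$ graphs far in edit distance from every $K_n^t$; one would first have to show all of these have $\eta\ge 2n$, which is again the conjecture). You candidly flag the intermediate regime of a few non-edges with small common neighborhoods as the obstacle, but that regime, where the sporadic order-$\le 9$ examples live and where your bound $\mu(x,y)\ge 2|N(x)\cap N(y)|-2$ degenerates, is precisely the crux, and no mechanism (your ``charging scheme'') is actually constructed to handle it.

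What you do have right is worth recording. The decomposition $\eta(G)=\sum_{\{x,y\}}\mu(x,y)$ is exactly the paper's equation~\eqref{eq:good} and the proposition following it, with your $c(x,y)$ being $g(x,y)$; the computation $\eta(K_n^t)=2t(n-t)-2(n-1)=2(t-1)(n-1-t)$ is correct and agrees with Lemma~\ref{lem:eta(KK)} at $t\in\{2,n-2\}$, correctly showing that within this family only the two exceptional graphs fall below $2n$ once $n\ge 8$; and the bound $\mu(x,y)\ge 2|N(x)\cap N(y)|-2$ for non-adjacent pairs at distance $2$ is sound. A more promising route than the diameter reduction would be to start from the paper's own dichotomy: Lemma~\ref{lem:disconnectivity} already gives $\eta(G)\ge 2n$ whenever no pair $N[u]\subseteq N[v]$ exists (and $G\ne C_5$), so the conjecture reduces to graphs containing a dominated vertex, where one would need to sharpen Lemmas~\ref{lem:induction}--\ref{lem:nopartialtruetwins} (e.g., improve the increment $\eta(G)-\eta(G-u)\ge 2$ outside a short list of configurations) and use the order-$10$ computer verification as the induction base. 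As it stands, your proposal identifies the correct extremal landscape but proves none of the reductions, so it does not close, or materially advance, the conjecture.
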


In Section~{\ref{sec:prel}} we derive Corollary~\ref{th:main2} from Theorem~\ref{th:main1}. 
We present the proofs of Theorems~\ref{th:main1} and~\ref{th:main3}
in Section~\ref{sec:main}, after introducing four technical lemmas in Section~\ref{sec:lemmas}. 
In Section~\ref{sec:add}, we apply our method in order to obtain stronger versions of other results related 
to the difference between the Szeged and Wiener index of a graph and, as corollaries, we present alternative proofs of the existing results.
Finally, in Section~\ref{sec:revised} we use our approach to prove results for the revised Szeged index.


\section{Preliminaries}
\label{sec:prel}

In the paper we will use the following definitions and notation.
The \emph{distance} $d(a,b)$ between two vertices $a$ and $b$ is
the length of a shortest path between them. We say that an edge $ab$ is \textit{horizontal to a vertex $u$} if $d(u,a) = d(u,b)$.
A cycle of length $k$ is denoted by $C_k$.
For a vertex $u$ of $G$, by $N(u)$ and $N[u]$ we denote the \emph{open} and the \emph{closed neighborhood} of $u$, respectively; hence $N[u]=N(u)\cup \{u\}$.
By extension, we define the open neighborhood $N(S)$ of a set $S$ to be
$(\cup_{a \in S}N(a)) \setminus S$.
By $N_i(u)$ we denote the set of vertices that are at distance $i$ from $u$.
Hence $N_0(u)=\{u\}$, $N_1(u)=N(u)$, etc.
The \emph{degree} of $u$ in $G$ is denoted by $d(u)$ and we always
denote the \emph{number of vertices} of $G$ by $n$, i.e. $n=|V(G)|$.
A vertex is \textit{dominating} if it is adjacent to all other vertices of a graph.
A \emph{non-edge} in a graph $G$ is a pair of non-adjacent vertices.  
A \textit{block} is a maximal subgraph without a cut-vertex.
Note that a block is either a $2$-connected subgraph, a single edge or an isolated vertex,
and every graph has a unique decomposition into blocks.
If $B\subseteq V(G)$, then $G[B]$ denotes the subgraph of $G$ induced by $B$.

For an unordered pair of vertices $a$ and $b$, an edge $e=uv$ is \textit{good}
for $\{a,b\}$ if $d(a,u) < d(a,v)$ and $d(b,v) < d(b,u)$.
Let $g(a,b)$ be the \emph{number of good edges} for $\{a,b\}$.
Note that every edge on a shortest path between $a$ and $b$ is a good edge for $\{a,b\}$, which gives $g(a,b) \geq d(a,b)$.

\begin{figure}[ht]
$$
\includegraphics{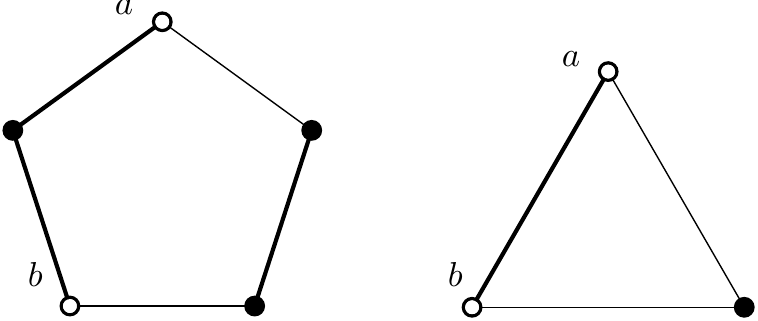}
$$
\caption{Cycles $C_5$ and $C_3$ with good edges for the pair $a$ and $b$
depicted in bold.}
\label{fig:c5}
\end{figure}

The concept of good edges has been introduced by Simi\'{c} et al. in~\cite{SimGutBal00} and used for an alternative definition of the Szeged index.
Observe that an edge $uv$ is good for exactly $n_{uv}(u)\cdot n_{uv}(v)$ pairs of vertices.
Therefore,
\begin{equation}
	\label{eq:good}	
	\Sz(G)=\sum_{uv\in E}n_{uv}(u)\cdot n_{uv}(v)=\sum_{\{a,b\}\subseteq V}g(a,b).
\end{equation}
Consequently, we obtain the following statement.

\begin{proposition} 
Let $G$ be a graph.
Then
$$
\eta(G) = \sum_{\{a,b\} \subseteq V} \big (g(a,b)-d(a,b) \big ).
$$
\end{proposition}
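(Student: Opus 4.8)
The plan is simply to combine the two sum-over-pairs expressions that have already been prepared in the preliminaries. By definition $\eta(G) = \Sz(G) - W(G)$, so I would substitute into this difference the standard formulas for each of the two indices, chosen so that both range over the same index set, namely all unordered pairs $\{a,b\} \subseteq V$.

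First I would invoke the good-edge reformulation of the Szeged index from~\eqref{eq:good}, which gives $\Sz(G) = \sum_{\{a,b\}\subseteq V} g(a,b)$. The observation underlying this identity---that an edge $uv$ is good for exactly $n_{uv}(u)\cdot n_{uv}(v)$ pairs of vertices---has already been recorded, so I may take it as given. Next I would use the very definition of the Wiener index, $W(G) = \sum_{\{a,b\}\subseteq V} d(a,b)$, which is a sum over the identical index set of unordered pairs.

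Since both sums range over the same set of pairs, I would subtract them termwise to obtain
$$
\eta(G) = \sum_{\{a,b\}\subseteq V} g(a,b) - \sum_{\{a,b\}\subseteq V} d(a,b) = \sum_{\{a,b\}\subseteq V}\big(g(a,b) - d(a,b)\big),
$$
which is exactly the claimed identity. There is no genuine obstacle here: the statement is an immediate consequence of~\eqref{eq:good} together with the definition of $W(G)$, and the reformulation does all the work. The only point worth flagging is that, since every shortest path between $a$ and $b$ consists of good edges, we have $g(a,b) \ge d(a,b)$, so each summand is nonnegative. This recovers in passing the inequality $\eta(G) \ge 0$ and identifies the per-pair defect $g(a,b) - d(a,b)$ as the natural quantity to analyze in the remainder of the paper.
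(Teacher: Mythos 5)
Your proposal is correct and matches the paper exactly: the paper also derives this proposition immediately from the good-edge reformulation $\Sz(G)=\sum_{\{a,b\}\subseteq V}g(a,b)$ in~\eqref{eq:good} together with the definition $W(G)=\sum_{\{a,b\}\subseteq V}d(a,b)$, subtracting the two sums termwise over the common index set. Your closing remark that $g(a,b)\ge d(a,b)$ recovers $\eta(G)\ge 0$ is likewise the observation the paper makes right after the proposition.
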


To simplify the notation, we write
$$
	\eta(a,b)=g(a,b)-d(a,b).
$$
Note that $g(a,a)=d(a,a)=0$. Since $g(a,b) \ge d(a,b)$ for every pair $a,b$, we easily obtain the known fact that $\Sz(G) \ge W(G)$.

Next, we define the \textit{contribution} $c_G(a)$ of a vertex $a$ in a graph $G$ as 
\begin{equation}
	\label{eq:c}
	c_G(a)=\sum_{b \in V}\eta(a,b)
	=\sum_{b \in V}\big(g(a,b)-d(a,b)\big).
\end{equation}
When there is no ambiguity from the context, we write $c(a)$ instead of $c_G(a)$. Therefore,
\begin{equation}
  \label{eq:eta_c}
  \eta(G) = \frac 12 {\sum_{a \in V} c(a)}.
\end{equation}

Now we are ready to prove Corollary~{\ref{th:main2}} using Theorem~{\ref{th:main1}}.

\begin{proof}[Proof of Corollary~\ref{th:main2}]
	Note first that for any $a,b$ from $B_i$ it holds $g_{B_i}(a,b) = g_G(a,b)$ and $d_{B_i}(a,b) = d_G(a,b)$.
	By Theorem~{\ref{th:main1}}, we have
	$$
		\eta(B_i) = \sum_{\{a,b\}\subseteq V(B_i)} \eta(a,b) =
		\sum_{\{a,b\}\subseteq V(B_i)}\big(g(a,b)-d(a,b)\big)\ge 2n_i-6,
	$$
	for every $i$, $1\le i\le k$. Since $g(a,b) \ge d(a,b)$ for every $\{a,b\}\subseteq V$, we conclude that
	$$
		\eta(G) = \sum_{\{a,b\}\subseteq V}\big(g(a,b)-d(a,b)\big) \ge \sum_{i=1}^k(2n_i-6).
	$$
\end{proof}


\section{Auxiliary results}
\label{sec:lemmas}

In this section we prove four technical lemmas which will be used
in the next sections to prove our theorems.

We first show that some vertices allow for an inductive argument.

\begin{lemma}
	\label{lem:induction}
	Let $G$ be a 2-connected graph. If $G$ contains a vertex $u$ such that $G-u$ is 2-connected, not complete, 
	and there exists a vertex $v$ such that $N[u]\subseteq N[v]$, then $\eta(G)-\eta(G-u)\geq 2$. 
\end{lemma}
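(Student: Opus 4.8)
The plan is to compare $G$ with $G-u$ pair by pair, using that the domination hypothesis $N[u]\subseteq N[v]$ makes $u$ redundant for distances. First I would record the consequences of $N[u]\subseteq N[v]$: since $u\in N[u]\subseteq N[v]$ we have $uv\in E$, and every neighbour of $u$ other than $v$ lies in $N(v)$. From this I would show that $u$ changes no distance inside $G-u$: if a shortest $a$--$b$ path with $a,b\neq u$ passed through $u$, its two neighbours $x,y$ of $u$ on the path lie in $N[v]$, so replacing the subpath $x\,u\,y$ by $x\,v\,y$ (or by the single edge $vy$ when $x=v$) gives an $a$--$b$ walk of no greater length that avoids $u$. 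Hence $d_G(a,b)=d_{G-u}(a,b)$ for all $a,b\in V(G-u)$, and consequently a good edge of $\{a,b\}$ not incident to $u$ is good in $G$ if and only if it is good in $G-u$. Writing $t_u(a,b)$ for the number of good edges of $\{a,b\}$ incident to $u$, the reformulation~\eqref{eq:good} yields
$$
\eta(G)-\eta(G-u)=\underbrace{\sum_{b\neq u}\eta_G(u,b)}_{=:\,\Phi}\;+\;\underbrace{\sum_{\{a,b\}\subseteq V(G-u)}t_u(a,b)}_{=:\,\delta},
$$
with $\Phi\ge 0$ and $\delta\ge 0$. It then remains to locate two units of surplus.

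I would split according to whether $N(u)$ is a clique. Suppose first it is not, and pick non-adjacent $p,q\in N(u)$; since any neighbour of $u$ distinct from $v$ is adjacent to $v$, the non-edge $pq$ forces $p,q\neq v$. As $p,q$ are non-adjacent but share the neighbour $u$, we have $d_G(p,q)=2$, and both $up$ and $uq$ are good for the pair $\{p,q\}$: for $up$ one checks $d(q,u)=1<2=d(q,p)$ and $d(p,p)=0<1=d(p,u)$, and symmetrically for $uq$. These two edges are incident to $u$, so $t_u(p,q)\ge 2$ and hence $\delta\ge 2$, settling this case.

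The other case is when $N(u)$ is a clique, i.e. $u$ is simplicial with $N[u]$ complete. Since $G-u$ (and hence $G$) is non-complete while $N[u]$ is a clique, $u$ is not dominating, so $N_2(u)\neq\emptyset$, and I would seek the surplus inside $\Phi$. For $z\in N_2(u)$ each $w\in N(u)$ satisfies $d(w,z)\in\{1,2\}$ (because $N(u)$ is a clique), and the edge $uw$ is good for $\{u,z\}$ precisely when $w$ is a common neighbour of $u$ and $z$. If some $z\in N_2(u)$ has two common neighbours $w_1,w_2$ with $u$, then $uw_1,uw_2,w_1z,w_2z$ are four distinct good edges for $\{u,z\}$, so $g(u,z)\ge 4>2=d(u,z)$, giving $\eta_G(u,z)\ge 2$ and thus $\Phi\ge 2$.

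The remaining case, which I expect to be the main obstacle, is that $N(u)$ is a clique and every $z\in N_2(u)$ has a \emph{unique} neighbour $w_z$ in $N(u)$. Then $uw_z$ and $w_zz$ are the only good edges of $\{u,z\}$ that are incident to $u$ or lie on the shortest $u$--$z$ path, so any surplus must come from good edges off the geodesic. Here I would use $2$-connectivity globally: removing $w_z$ leaves $G$ connected, producing a second $u$--$z$ path internally disjoint from $w_z$, hence a cycle through $u$; along this alternative path the quantity $d(u,\cdot)-d(z,\cdot)$ runs from $-2$ at $u$ to $+2$ at $z$, and an edge on which it jumps by $2$ is exactly a good edge for $\{u,z\}$ distinct from the geodesic ones. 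The delicate point is that this quantity may instead climb in unit steps (an edge that is horizontal to one of $u,z$ is not good), so a single $z$ need not produce surplus; the crux is therefore to aggregate these contributions over all distance-$2$ vertices and argue, using the cyclic structure forced by $2$-connectivity, that the total surplus reaches $2$ rather than merely $1$. This is the step that does not localise, and I would spend the bulk of the argument making it rigorous, for instance by choosing the alternative paths to be shortest and analysing the neighbours of $z$ outside $N[u]$.
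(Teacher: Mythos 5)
Your setup is the paper's: the identity $\eta(G)-\eta(G-u)=\Phi+\delta$ is exactly equation~\eqref{eq:twins} there, your clique/non-clique split on $N(u)$ recovers the paper's contribution $(C1)$ (two good edges at $u$ for each non-edge in $N(u)$), and your observation that a vertex $z\in N_2(u)$ with two common neighbours of $u$ gives $\eta(u,z)\ge 2$ is the paper's $(C2)$. Up to that point the argument is sound. But the lemma is not proved: the case you flag as ``the main obstacle'' --- $N(u)$ a clique and every $z\in N_2(u)$ having a unique neighbour in $N(u)$ --- is precisely where the content of the lemma lies, and you leave it as a plan rather than an argument. Your proposed route (aggregate, over all $z\in N_2(u)$, the good edges for the pairs $\{u,z\}$ arising from detour paths) is not carried out, and it is not clear it can be: a single detour whose potential climbs in unit steps yields no extra good edge for $\{u,z\}$ at all, and there is no a priori reason the total over all such $z$ reaches $2$.

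The paper closes this case with a different mechanism, its contribution $(C3)$, which harvests surplus for pairs \emph{other than} $\{u,z\}$. Fix $a\in N(N[u])$ with unique neighbour $x$ in $N(u)$, and take a shortest $a$--$u$ path $P$ in the connected graph $G-x$. Tracking $d(u,\cdot)$ along $P$, one finds either an interior vertex $b$ with two $P$-neighbours strictly closer to $u$ (then $(C2)$ gives $\eta(u,b)\ge 2$), or an edge $b_1b_2$ of $P$ horizontal to $u$ whose other $P$-neighbours $z_1,z_2$ satisfy $d(u,z_i)<d(u,b_i)$ with $b_1z_2,b_2z_1\notin E$ (chords are excluded because $P$ is shortest). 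In the latter case $b_1z_1$ is a good edge for $\{u,b_2\}$ and $b_2z_2$ is a good edge for $\{u,b_1\}$, so $\eta(u,b_1)\ge 1$ and $\eta(u,b_2)\ge 1$, and the two units come from two distinct pairs rather than from any aggregation over $N_2(u)$. This is the idea missing from your proposal; without it (or a worked-out substitute) the proof is incomplete. You should also note that the paper first disposes of the sub-case $|N_2(u)|=1$ directly via $(C2)$ and $2$-connectivity, before running the detour argument when $|N_2(u)|\ge 2$.
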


\begin{proof}
	Let $G=(V,E)$ be a graph satisfying the assumptions, and let $G^*= G-u$.
	Since $G^*$ is not complete, neither is the graph $G$.
	Since $N[u] \subseteq N[v]$, we have that $d_{G^*}(a,b)=d_G(a,b)$ for any two vertices distinct from $u$. Hence,
	$$
		W(G)=W(G^*)+ \sum_{a\in V\setminus\{u\}} d(u,a).
	$$

	Let $q$ be the number of couples $(\{a,b\},w)$ such that $a, b$ are vertices of $G^*$ and $w$ is a neighbor of $u$ such that the edge $uw$ is good for $\{a,b\}$.
	Similarly as above, one can easily verify that
	$$
		\Sz(G) = \Sz(G^*)+ \sum_{a\in V\setminus\{u\}} g(u,a)+ q\,.
	$$ 

	Therefore,
	\begin{equation}
		\label{eq:twins}
		\eta(G)- \eta(G^*) = \sum_{a\in V\setminus\{u\}}\big(g(u,a)-d(u,a)\big) + q,
	\end{equation}
	and it suffices to prove that the right-hand side of (\ref{eq:twins}) is at	least 2. In order to do so,
	we consider the following contributions:

	\begin{itemize}
		\item[$(C1)$] \textit{$q$ is at least twice the number of non-edges in $N(u)$}.
				Indeed, for any two non-adjacent neighbors $w_1,w_2$ of $u$, the edges
				$uw_1$ and $uw_2$ are both good for $\{w_1,w_2\}$, and so each of the
				couples $(\{w_1,w_2\},w_1)$ and $(\{w_1,w_2\},w_2)$ contributes $1$ to $q$.
		\item[$(C2)$] \textit{For every vertex $a \not\in N[u]$, we have $g(u,a)-d(u,a) \geq 2 (p_a-1)$}, 
				where $p_a$ is the number of neighbors $w$ of $a$ with $d(u,w) < d(u,a)$. This follows from the fact that
				both edges, $uw$ and $aw$, are good for the pair $\set{u,a}$, for every $w$.
		\item[$(C3)$] \textit{For every edge $ab$ horizontal to $u$, and for every neighbor $x$ of $a$, 
				with $d(u,x) < d(u,a)$, such that $x$ is not adjacent to $b$, the edge $ax$ is good for $\set{u,b}$, i.e. it
				contributes an extra $1$ to $g(u,b)-d(u,b)$}.
	\end{itemize}	
	
	If $u$ is dominating, then, since $G$ is not a complete graph, there is a missing edge in $N(u)$ and $q \geq 2$ by $(C1)$.
	
	Assume now that $u$ is not dominating. We show that $\sum_{a\in V\setminus\{u\}} \big(g(u,a)-d(u,a)\big) \geq 2$.
	If there is exactly one vertex that does not belong to $N[u]$, then it must have at least two neighbors in $N(u)$ since $G$ is $2$-connected,
	and the conclusion follows from $(C2)$. 
	
	From now on, we assume that there are at least two vertices that do not belong to $N[u]$.
	Since $G$ is $2$-connected, there are at least two vertices in $N(N[u])$.
	If both have at least two neighbors in $N(u)$, we have $\sum_{a\in V\setminus\{u\}} \big(g(u,a)-d(u,a)\big)\geq 4$, by $(C2)$.
	So we may assume there is a vertex $a \in N(N[u])$ that has exactly one neighbor $x$ in $N(u)$.
	Since $G$ is $2$-connected, the graph $G-x$ is connected. 
	Let $P$ be a shortest path between $a$ and $u$ in $G-x$.
	Note that $P$ contains:
	\begin{itemize}
		\item{} a vertex $b$ having two neighbors $y_1$ and $y_2$ with $d(u,y_1)=d(u,y_2)<d(u,b)$; or 
		\item{} an edge $b_1b_2$ horizontal to $u$ with the property that for the other neighbors, $z_1$ and $z_2$, of $b_1$ and $b_2$ in $P$, respectively, 
				it holds $d(u,z_i)<d(u,b_i)$, $i \in \set{1,2}$, and $b_1z_2,b_2z_1 \notin E$. 
	\end{itemize}
	By $(C2)$ and $(C3)$, we infer that $\sum_{a\in V\setminus\{u\}} \big(g(u,a)-d(u,a)\big) \geq 2$ in either case, and so		
	we conclude that $\eta(G)-\eta(G^*)\geq 2$.	
\end{proof}

In the forthcoming lemma, we characterize the graphs $G$ for which the difference $\eta(G) - \eta(G-u)$ attains $2$.

\begin{lemma}
	\label{lem:K2n}
	Let $G$ be a $2$-connected graph of order $n$ containing a vertex $u$ such that $G-u$ is 2-connected, 
	not complete, and there exists a vertex $v$ such that $N[u]\subseteq N[v]$. 
	If $\eta(G)-\eta(G-u)=2$ and $G-u$ is isomorphic to $K_{n-1}^2$ or $K_{n-1}^{n-3}$, then $G$ is isomorphic to $K_n^2$ or $K_n^{n-2}$.
\end{lemma}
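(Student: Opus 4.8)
The plan is to revisit the increment formula (\ref{eq:twins}) from Lemma~\ref{lem:induction}. Writing $G^*=G-u$, we have
$$
\eta(G)-\eta(G^*) = \sum_{a\in V\setminus\{u\}}\bigl(g(u,a)-d(u,a)\bigr)+q = 2,
$$
and I want to read off exactly which attachments of $u$ to $G^*$ make the right-hand side equal to $2$. Since both summands are non-negative, I would split the analysis according to whether $u$ is dominating in $G$.

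First I would treat a dominating $u$. Then $d(u,a)=1$ for every $a\ne u$, and since $u$ is the unique vertex at distance $0$ from itself, the only good edge for a pair $\{u,a\}$ is $ua$; hence $\sum_a\bigl(g(u,a)-d(u,a)\bigr)=0$ and the increment equals $q$. Unwinding the definition of $q$ for a dominating $u$, a couple $(\{a,b\},w)$ is counted exactly when $w\in\{a,b\}$ and the other vertex is a non-neighbour of $w$, so $q=2\cdot(\text{number of non-edges of }G^*)$. As $K_{n-1}^{n-3}$ has a single non-edge while $K_{n-1}^2$ has $n-4$ of them, the equality $q=2$ holds precisely for $G^*=K_{n-1}^{n-3}$ (all $n$) and for $G^*=K_{n-1}^2$ only when $n=5$. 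In each such case $u$ is adjacent to everything, so $G$ inherits the unique non-edge of $G^*$ and $G=K_n-e=K_n^{n-2}$.

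Next I would treat a non-dominating $u$. The proof of Lemma~\ref{lem:induction} already gives $\sum_a\bigl(g(u,a)-d(u,a)\bigr)\ge 2$ here, so the increment being $2$ forces $\sum_a\bigl(g(u,a)-d(u,a)\bigr)=2$ and $q=0$; by $(C1)$ the latter means $N(u)$ is a clique of $G^*$. I would then bound the sum from below through $(C2)$, using that for every $a\notin N[u]$ one has $g(u,a)-d(u,a)\ge 2(p_a-1)\ge 2\bigl(|N(a)\cap N(u)|-1\bigr)$. Since $G^*$ is ``almost complete'', the vertices outside $N[u]$ are adjacent to most of $N(u)$, and a direct count over all cliques $N(u)$ of $K_{n-1}^{n-3}$ yields $\sum_{a\notin N[u]}\bigl(|N(a)\cap N(u)|-1\bigr)\ge n-4$. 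Thus for $n\ge 6$ every non-dominating attachment to $K_{n-1}^{n-3}$ has increment at least $2(n-4)>2$, so this graph contributes no equality case with $u$ non-dominating.

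For $G^*=K_{n-1}^2$ I would carry out the same enumeration. Writing $w_1,\dots,w_{n-2}$ for the clique of the underlying $K_{n-2}$ and $z$ for the degree-$2$ vertex adjacent to $w_1,w_2$, the cliques of $G^*$ are the subsets of $\{w_1,\dots,w_{n-2}\}$ and the subsets of $\{z,w_1,w_2\}$. For the full clique $N(u)=\{w_1,\dots,w_{n-2}\}$ a direct computation gives $g(u,z)-d(u,z)=2$ and $g(u,a)-d(u,a)=0$ otherwise, so the increment is exactly $2$ and $G=K_n^2$. Every proper subclique of $\{w_1,\dots,w_{n-2}\}$, as well as $N(u)=\{z,w_1,w_2\}$, forces $\sum_{a\notin N[u]}\bigl(|N(a)\cap N(u)|-1\bigr)\ge n-4\ge 2$ and hence increment $>2$ for $n\ge 6$; the two remaining cliques $N(u)=\{z,w_1\}$ and $\{z,w_2\}$ escape this crude bound, and for them I would exhibit, for each $w_j$ with $j\ge 3$, the extra good edge $zw_2$ (resp.\ $zw_1$) for the pair $\{u,w_j\}$, which raises the increment to $n-2>2$. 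It then remains to settle $n=5$, where $K_{n-1}^2$ and $K_{n-1}^{n-3}$ coincide (both equal $K_4^2$) and the above estimates are tight; here I would inspect the finitely many cliques $N(u)$ by hand and observe that each attachment of increment $2$ yields a graph isomorphic to $K_5^2$ or $K_5^{n-2}$. Combining the cases gives $G\cong K_n^2$ or $G\cong K_n^{n-2}$. The main obstacle is precisely this non-dominating bookkeeping: one must show that every clique $N(u)$ other than the full clique of $K_{n-1}^2$ forces the increment above $2$, which in the residual cases $\{z,w_1\},\{z,w_2\}$ requires the finer good-edge argument of type $(C3)$ beyond $(C2)$, together with the separate, by-hand treatment of $n=5$.
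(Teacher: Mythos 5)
Your proposal is correct, but it takes a genuinely different route from the paper. Both proofs start from the increment formula of Lemma~\ref{lem:induction} and the contributions $(C1)$--$(C3)$, and both split on whether $u$ is dominating (your treatment of the dominating case, $q=2\cdot\#\{\text{non-edges of }G-u\}$, matches the paper's). The divergence is in the non-dominating case. The paper introduces the counter $\alpha$ of ordered triples witnessing $(C2)$/$(C3)$-type contributions, observes $\alpha\le 2$, and then argues \emph{structurally}: since $K_{n-1}^2$ and $K_{n-1}^{n-3}$ each have at least two dominating vertices, locating those vertices relative to $N(u)$ and $N(N[u])$ forces $|N(N[u])|=1$, after which $2$-connectivity and $\alpha\le 2$ pin down $G\cong K_n^2$ in one stroke, uniformly in $n$. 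You instead enumerate all clique neighbourhoods $N(u)$ inside the two concrete graphs and bound the increment from below case by case; this is heavier bookkeeping, needs the finer $(C3)$-style good edge $zw_2$ for the residual cliques $\{z,w_1\},\{z,w_2\}$, and requires a separate finite verification at $n=5$ where your general bound $2(n-4)$ degenerates to $2$ (the paper's argument needs no such special case). I checked your claimed counts (e.g. $\sum_{a\notin N[u]}(|N(a)\cap N(u)|-1)\ge n-4$ over all cliques of $K_{n-1}^{n-3}$, and the exact increment $2$ for $N(u)=\{w_1,\dots,w_{n-2}\}$ in $K_{n-1}^2$) and they hold, so the plan closes; what your approach buys is an explicit identification of \emph{which} attachments achieve equality, at the price of an enumeration the paper avoids. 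One small slip: in the $n=5$ discussion the equality graphs are $K_5^2$ and $K_5^{3}$, not ``$K_5^{n-2}$'' with $n$ still symbolic.
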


\begin{proof}
	Note that $n \geq 5$, since $G-u$ is $2$-connected and non-complete. 
	Let $G^* = G-u$ and let $\alpha$ be the number of ordered triples $(x,y,z)$ such that $y$ is adjacent to both $x$ and $z$ and $d(u,x) < d(u,y)$,
	and either 
	\begin{itemize}
		\item[$(a)$] $d(u,z)< d(u,y)$; or 
		\item[$(b)$] the edge $yz$ is horizontal to $u$ and $x$ is not adjacent to $z$.
	\end{itemize}

	From the proof of Lemma~\ref{lem:induction}, we know that either $u$ is dominating and, by $(C1)$, there is precisely one edge missing in $N(u)$,
	or $u$ is not dominating and, by $(C2)$ or $(C3)$, $\alpha \leq 2$.
	In the first case we immediately obtain that $G^*$ is isomorphic to $K^{n-3}_{n-1}$, and so $G$ is isomorphic to $K_n^{n-2}$.
	Therefore, we may assume that $u$ is not dominating, $N(u)$ induces a clique and $\alpha \leq 2$. 

	In what follows, we show that $|N(N[u])|=1$. Since $G^*$ is isomorphic to $K_{n-1}^{n-3}$ or $K_{n-1}^{2}$, it contains at least two dominating vertices.
	Note that every dominating vertex of $G^*$ is in $N(u)$ or $N(N[u])$.
	
	Suppose first there are two dominating vertices of $G^*$ in $N(u)$, say $w_1$ and $w_2$. 
	Then, for every $a\in N(N[u])$ both $(w_1,a,w_2)$ and $(w_2,a,w_1)$ contribute $1$ to $\alpha$ and hence $|N(N[u])|\le 1$.
	
	Suppose now there is a dominating vertex $w$ in $N(N[u])$. Then, due to the $2$-connectivity of $G$, $w$ has at least two
	neighbors, say $a_1$ and $a_2$, in $N(u)$ and consequently both $(a_1,w,a_2)$ and $(a_2,w,a_1)$ contribute $1$ to $\alpha$.
	If there is a vertex $z\ne w$ with $z\in N(N[u])$, then $wz\in E(G^*)$ as $w$ is dominating in $G^*$, 
	and either $a_i z \notin E(G^*)$ in which case $(a_i,w,z)$ contributes $1$ to $\alpha$, 
	or $a_1z,a_2z\in E(G^*)$ in which case $(a_1,z,a_2)$ contributes $1$ to $\alpha$.
	Consequently, $|N(N[u])|\le 1$ also in this case.
	
	So, in both cases we have $|N(N[u])|\le 1$ and since $u$ is not dominating in $G$, we conclude $|N(N[u])|=1$.
	Denote by $z$ the unique vertex of $N(N[u])$.
	Since $G$ is $2$-connected and $\alpha\le 2$, $z$ has exactly two neighbors
	in $N(u)$, which forms a clique.
	Consequently, $G^*$ is isomorphic to $K^2_{n-1}$ and $G$ is isomorphic to
	$K^2_n$.
\end{proof}

In the next lemma we also assume existence of two vertices, one of which dominates the other.

\begin{lemma}
	\label{lem:nopartialtruetwins}
	Let $G$ be a 2-connected non-complete graph distinct from $K_4^2$.
	Suppose that for every pair of vertices $u,v$ with $N[u]\subseteq N[v]$, $G-u$ 
	is complete or not $2$-connected, and at least one such a pair exists. Then for every
	two vertices $u,v$, where $N[u]\subseteq N[v]$, it holds that $G-u$ is not $2$-connected 
	and no block of $G-u$ is isomorphic to $K_t$, $K^2_t$ or $K^{t-2}_t$ for any $t$.
\end{lemma}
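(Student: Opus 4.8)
The plan is to argue by contradiction, exploiting the hypothesis in a single clean form: \emph{if $G$ admits a pair $(x,y)$ with $x\neq y$, $N[x]\subseteq N[y]$, and $G-x$ both $2$-connected and non-complete, then the hypothesis is violated}. So the whole proof reduces to producing such a pair whenever the conclusion fails. First I would dispose of the ``complete'' alternative once and for all by showing $G$ is not isomorphic to any $K_n^s$ with $2\le s\le n-2$. Indeed, if $G=K_n^s$ then $n\ge 5$ (the only $2$-connected non-complete graph on $4$ vertices admitting a dominated pair is $K_4^2$, which is excluded), and deleting a suitable vertex yields a smaller extremal graph: removing a non-neighbour of the apex when $s\le n-3$, or a dominating vertex when $s=n-2$, produces $K_{n-1}^{s}$ or $K_{n-1}^{n-3}$, which is $2$-connected and non-complete --- contradicting the hypothesis. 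Consequently, for every pair $u,v$ with $N[u]\subseteq N[v]$ the graph $G-u$ cannot be complete (else $G=K_n^{d(u)}$), and being connected but, by hypothesis, not $2$-connected, it has a cut vertex. This already gives the first half of the statement.

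Next I would pin down the block structure of $G-u$, for a fixed such pair $(u,v)$ (note $u\sim v$). The key point is that \emph{every} cut vertex of $G-u$ equals $v$: if $c\neq v$ were a cut vertex, then since $G-c$ is connected, $u$ has a neighbour in every component of $(G-u)-c$; placing $v$ in one component, a neighbour of $u$ in a different component lies in $N[v]\setminus\{v\}$, hence is adjacent to $v$, which is impossible across distinct components. Thus $v$ is the unique cut vertex and $G-u$ is a union of blocks $B_1,\dots,B_m$ ($m\ge 2$) meeting pairwise only in $v$; in particular each vertex of $B_i\setminus\{v\}$ is private to $B_i$, so its neighbours in $G$ lie in $B_i\cup\{u\}$, and it is adjacent to $u$ only if it is a neighbour of $v$. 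I would also record that \emph{$G$ has no simplicial vertex}: a simplicial vertex $x$ (of degree $\ge 2$, as $G$ is $2$-connected) together with any neighbour forms a dominated pair, and deleting a simplicial vertex of degree $\ge 2$ from a $2$-connected graph on $\ge 4$ vertices keeps it $2$-connected (a cut vertex of $G-x$ would have to split the clique $N(x)$, hence already cut $G$), so $G-x$ would be $2$-connected and, by the first step, non-complete.

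Now suppose some block $B=B_i$ is isomorphic to $K_t$, $K_t^2$, or $K_t^{t-2}$. If $B=K_t$, take any $x\in B\setminus\{v\}$; being private, $N_G(x)\subseteq (B\setminus\{x\})\cup\{u\}$. If $x\not\sim u$ then $N_G(x)=B\setminus\{x\}$ is a clique; otherwise every vertex of $B\setminus\{v\}$ is adjacent to $u$, and since $u\sim v$ as well, $B\cup\{u\}$ is a clique and $N_G(x)=(B\cup\{u\})\setminus\{x\}$ is a clique. In either case $N_G(x)$ is a clique, so $x$ is simplicial, contradicting the previous paragraph. If $B$ is $K_t^2$ or $K_t^{t-2}$ (so $t\ge 4$), let $a$ be the apex, which is simplicial in $B$. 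If $a=v$, then a non-neighbour of the apex is simplicial in $G$; if $a\neq v$ but $a\not\sim u$, then $a$ itself is simplicial in $G$ --- both contradictions. Hence $a\neq v$ and $a\sim u$, which forces $a\sim v$, so $v$ is a neighbour of $a$ inside $B$; since the neighbours of $a$ in $B$ form a clique containing $v$ and $v\sim u$, we get $N[a]\subseteq N[v]$, i.e.\ $(a,v)$ is a dominated pair. It then remains to show $G-a$ is $2$-connected.

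The main obstacle is exactly this last verification, since deleting a dominated vertex need not preserve $2$-connectivity: the danger is that $\{a,v\}$ is a $2$-cut, which occurs precisely when $a$ is the only neighbour of $u$ in $B\setminus\{v\}$, so that removing $a$ and $v$ detaches $B\setminus\{v\}$ from the rest. I would resolve this by a dichotomy on the non-neighbours of the apex (which exist, as $B$ is non-complete): if some non-neighbour $z$ of $a$ has $z\not\sim u$, then $z$ is simplicial in $G$, a contradiction; otherwise $u$ is adjacent to every such $z$, so $u$ retains a neighbour in $B\setminus\{v,a\}$ and $(G-a)-v$ stays connected through $u$. A short check then shows $G-a$ has no cut vertex: a cut vertex $w\notin\{v,u\}$ would force all of $N_G(a)$ into one component (the clique of $a$'s neighbours in $B$ contains $v$, and $u\sim v$), making $w$ a cut vertex of $G$; while $w=u$ fails because $(G-u)-a$ remains connected ($B-a$ is connected and all blocks still meet at $v$). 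Hence $G-a$ is $2$-connected and, by the first step, non-complete, contradicting the hypothesis. This exhausts all block types and completes the proof.
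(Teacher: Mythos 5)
Your proof is correct. The overall strategy coincides with the paper's --- derive a contradiction by exhibiting a dominated pair $(x,y)$ for which $G-x$ is $2$-connected and non-complete --- but your execution differs in two worthwhile ways. First, you dispose of the ``complete'' alternative by showing $G\not\cong K_n^s$ outright (deleting a non-neighbour of the apex, or a dominating vertex when $s=n-2$, yields a smaller $2$-connected non-complete graph), whereas the paper assumes some $G-u$ is complete and finds true twins in $N(N[u])$ or in $N(u)$; the two arguments are equivalent in content. Second, and more substantially, your key tool is the observation that $G$ has \emph{no simplicial vertex} --- removing a simplicial vertex cannot create a cut vertex because its neighbourhood is a clique --- which instantly eliminates $K_t$-blocks and every placement of the apex of a $K_t^2$- or $K_t^{t-2}$-block except the single configuration where the apex is adjacent to both $u$ and $v$. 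The paper instead runs a case analysis on whether $v$ dominates the block and on how many dominating vertices the block contains. Your route has the advantage of explicitly verifying facts the paper only asserts (that $v$ is the unique cut vertex of $G-u$, and that the various deletions preserve $2$-connectivity), at the price of the final hand-check that $\{a,v\}$ is not a $2$-cut, which you carry out correctly via the dichotomy on non-neighbours of the apex. Both proofs are sound; yours is somewhat more self-contained.
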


\begin{proof}
	Suppose there is a pair $u,v$ with $N[u]\subseteq N[v]$ and $G-u$ is complete.
	Then $u$ is not dominating in $G$ and $V = \set{u} \cup N(u) \cup N(N[u])$.
	If $N(N[u])$ contains at least two vertices $w_1,w_2$, then these vertices
	satisfy $N[w_1]=N[w_2]$, and $G - w_1$ is $2$-connected but not complete, which contradicts assumptions of the lemma.	
	So, let $w$ be the unique vertex in $N(N[u])$. Let $v_1,v_2$ be two vertices in $N(u)$
	(such vertices exist since $G$ is $2$-connected).
	We have $N[v_1]=N[v_2]$. If $N(u)$ is of size at least three, then $G - v_1$ is $2$-connected but not complete, a contradiction.
	If $N(u)$ is of size two, then $G$ is isomorphic to $K_4^2$, a contradiction.	
	
	By the above paragraph, we may assume that for every pair $u,v$ where $N[u]\subseteq N[v]$, the graph $G-u$ is not $2$-connected.
	Note that $v$ is contained in every block of $G-u$ and is the only cut-vertex in $G-u$.	
	
	Next, suppose for a contradiction that there is a pair $u,v$ with $N[u]\subseteq N[v]$ such that a block $C$ of $G-u$ is
	isomorphic to $K_t$, $K^2_t$ or $K^{t-2}_t$ for some $t$. Observe that each of $K_t$, $K^2_t$ and $K^{t-2}_t$ 
	has at least two dominating vertices since $t\ge 2$. Moreover, since $G$ is $2$-connected, there is a vertex $x \in N(u)$ distinct from $v$ in $C$.
	We distinguish two cases:
	\medskip

	{\bf Case~1.}
	{\it The vertex $v$ dominates $C$.}
	Then, every vertex $w\in V(C)\setminus\{v\}$ satisfies $N[w]\subseteq N[v]$,
	and so $G-w$ is not $2$-connected. If $|V(C)|=2$, i.e. if $C$ is a trivial block consisting of a single
	edge, then $x$ is the only vertex of $C$ distinct from $v$ and $d(x)=2$ since $xv,xu\in E$. 
	Since $G-x$ is not $2$-connected, while $G$ is, $G-x$ must be a single edge.
	Consequently $G$ is isomorphic to $K_3$, a contradiction. Hence $|V(C)|\ge 3$.
	
	Let $z\in V(C)\setminus \set{v,x}$. If $C$ contains a dominating vertex $w$ such that $w \notin \{v,z\}$, 
	then $G-z$ is $2$-connected, a contradiction. However since $C$ is isomorphic to $K_t$, $K^2_t$ or $K^{t-2}_t$, 
	it contains at least one dominating vertex $w$ other than $v$. This means that $C$ contains at most three vertices, and hence $|V(C)|=3$.
	Since $C$ has two dominating vertices, $C$ is isomorphic to $K_3$, and consequently $G-z$ is $2$-connected, a contradiction.
	\medskip

	{\bf Case~2.}
	{\it The vertex $v$ does not dominate $C$.}
	Then $C$ is not $K_t$.
	As mentioned above, there are at least two dominating vertices in $C$, and
	if $C$ is distinct from $K^2_t$, then there are at least three dominating
	vertices in $C$ since $C\ne K_4^2$.

	First suppose that $C$ has at least three dominating vertices $w_1,w_2,w_3$. 
	If $w_1u\notin E$, then $N[w_1]\subseteq N[w_2]$ and $G-w_1$ is
	$2$-connected, a contradiction. On the other hand if $w_1u\in E$, then $N[w_2]\subseteq N[w_1]$ and
	$G-w_2$ is $2$-connected, a contradiction.

	Thus, $C$ has exactly two dominating vertices, say $w_1$ and $w_2$. Then $C$ is isomorphic to $K^2_t$.	
	Observe that $|V(C)|\ge 4$. If $w_1u\in E$, then for every vertex $z$ of $C$, $z\notin\{v,w_1,w_2\}$, we
	have $N[z]\subseteq N[w_1]$ and $G-z$ is $2$-connected, a contradiction.
	Therefore $w_1u,w_2u\notin E$. Let $z$ be the vertex of degree $2$ in $C$. 
	Then $z \notin \set{w_1,w_2}$, and since $N[u] \subseteq N[v]$, also $z \ne v$.	
	If $zu \notin E$, then $N[z]\subseteq N[w_1]$ and $G-z$ is $2$-connected, a contradiction.
	On the other hand if $zu\in E$, then $N[w_1]\subseteq N[w_2]$ and $G-w_1$ is $2$-connected, a contradiction.
	
	\medskip
	This completes the proof of the lemma.
\end{proof}

We now describe how to deal with the case when there are no two vertices $u,v$ with $N[u]\subseteq N[v]$.

\begin{lemma}
	\label{lem:disconnectivity}
	Let $G$ be a 2-connected non-complete graph distinct from $C_5$ with no two vertices $u,v$ satisfying $N[u]\subseteq N[v]$.
	Then $c(u)\geq 4$ for every vertex $u$, and hence $\eta(G)\geq 2n$.
\end{lemma}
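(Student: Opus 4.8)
The plan is to prove the stronger pointwise bound $c(u) \ge 4$ for every vertex $u$; since $\eta(G) = \frac12 \sum_{a\in V} c(a)$ by~(\ref{eq:eta_c}), summing over all $n$ vertices immediately gives $\eta(G) \ge 2n$. So I fix a vertex $u$ and work entirely inside its distance layers $N_0(u)=\{u\},N_1(u),N_2(u),\dots$, recalling that every good edge for a pair $\{u,b\}$ must join two consecutive layers $N_i(u),N_{i+1}(u)$ (a horizontal-to-$u$ edge is never good for a pair containing $u$), that at least $d(u,b)$ such edges exist, and that $g(u,b)-d(u,b)$ is the resulting surplus.

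First I would record the two structural consequences of the hypotheses. Since no two vertices satisfy $N[x]\subseteq N[y]$, the graph $G$ has no dominating vertex (a dominating vertex $w$ would give $N[v]\subseteq N[w]$ for every $v$), and hence $N_2(u)\neq\emptyset$. More usefully, the no-nested condition is really a constraint on \emph{adjacent} pairs only: for every edge $xy$ both endpoints have a private neighbour, that is, $x$ has a neighbour outside $N[y]$ and vice versa. Applied to each edge $uw$ with $w\in N_1(u)$, this says that every neighbour $w$ of $u$ has a neighbour in $N_2(u)$, and that $u$ has a neighbour in $N_1(u)$ not adjacent to $w$. These facts guarantee that $N_2(u)$ is attached to $N_1(u)$ in a non-degenerate way and will supply exactly the non-adjacencies needed later.

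Next I would harvest contributions to $c(u)$ from the two mechanisms already isolated in Lemma~\ref{lem:induction}. By $(C2)$, each $a\in N_2(u)$ contributes at least $2(p_a-1)$ to $c(u)$, where $p_a=|N(a)\cap N_1(u)|\ge 1$; summing, and since $\sum_{a\in N_2(u)}(p_a-1)$ equals the number of $N_1$--$N_2$ edges minus $|N_2(u)|$, this alone forces $c(u)\ge 4$ as soon as there are at least $|N_2(u)|+2$ edges between $N_1(u)$ and $N_2(u)$. This disposes of the ``dense'' case, so the crux is the complementary ``sparse'' case, in which essentially every vertex of $N_2(u)$ has a unique neighbour in $N_1(u)$. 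Here I would invoke $2$-connectivity: a vertex $a\in N_2(u)$ whose only neighbour in $N_1(u)$ is $w$ must, since $G-w$ is connected, have a further neighbour in $N_2(u)\cup N_3(u)$, producing either a horizontal edge inside $N_2(u)$ or a radial edge descending to $N_3(u)$. Feeding such a horizontal edge into $(C3)$, together with the private-neighbour condition from the previous paragraph (which supplies the non-adjacency required to make the relevant radial edge good), yields the remaining units of surplus, counted over the pairs $\{u,b\}$ with $b\in N_1(u)\cup N_2(u)$.

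The hard part will be precisely this sparse case: one must show that $2$-connectivity together with the no-nested-neighbourhood condition always forces a total surplus of at least $4$, and that the only shortfall is $C_5$, for which these mechanisms yield exactly $c(u)=2$. This calls for a careful case analysis on $|N_1(u)|$, $|N_2(u)|$ and the adjacencies among them, where the main pitfall is double-counting: one has to check that the edges charged by $(C2)$ and those charged by $(C3)$ are distinct good edges of the corresponding pairs. The excluded configurations $C_5$ and the complete graph are exactly what let me terminate the smallest subcases, so once the two contribution mechanisms are combined with the private-neighbour reformulation, I expect the bound to be essentially forced, with the bookkeeping being the only delicate point.
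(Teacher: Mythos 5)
Your setup (fix $u$, work in the BFS layers $N_0(u),N_1(u),\dots$, harvest surplus via the mechanisms $(C2)$ and $(C3)$, and use the no-nested-neighbourhood hypothesis in its ``private neighbour'' form) matches the paper's framing, and your dense case is correct: if the number of $N_1$--$N_2$ edges is at least $|N_2(u)|+2$, then $(C2)$ alone gives $c(u)\ge 4$. But there is a genuine gap in the sparse case, and it is not mere bookkeeping. You propose to close it by a case analysis on $|N_1(u)|$, $|N_2(u)|$ and their adjacencies, with the surplus ``counted over the pairs $\{u,b\}$ with $b\in N_1(u)\cup N_2(u)$.'' This cannot work, because the surplus need not live anywhere near the first two levels. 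Take $G=C_7$ and $u=0$: every vertex of $N_2(u)$ has a unique neighbour in $N_1(u)$, there are no horizontal edges inside $N_2(u)$, and a direct computation gives $\eta(u,b)=0$ for $b\in N_1(u)$ and $\eta(u,b)=1$ for $b\in N_2(u)$, so the pairs you restrict to contribute only $2$; the other $4$ units of $c(u)=6$ come from the two vertices of $N_3(u)$ and the horizontal edge between them. For longer cycles (and for thin $2$-connected graphs generally) the decisive configuration sits at the \emph{deepest} BFS level, arbitrarily far from $N_2(u)$, so no analysis confined to levels $1$--$3$ can terminate.

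The missing idea is a propagation argument that forces the analysis down to the last level $N_p$. The paper does this by showing (its Claim~\ref{cl:noPs}) that any vertex contributing nothing — one with a unique neighbour in the previous level and no ``$P_2$-partner'' — must, by $2$-connectivity and the no-nested hypothesis, have a neighbour in the \emph{next} level; hence the witnesses (a vertex with two parents, or an adjacent pair with distinct unique parents) can only occur in $N_p$, where Claim~\ref{cl:level_p} guarantees they must occur. The contradiction with $c(u)\le 3$ is then derived from the structure around $N_p$, $N_{p-1}$, $N_{p-2}$, and it is exactly there that $C_4$ (where $c(u)=4$) and $C_5$ (the excluded exception) emerge. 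Your sketch contains the local mechanisms but not this descent, so the ``hard part'' you defer is in fact the entire content of the lemma; as written, the sparse case merely pushes the problem one level deeper with no way to close the recursion.
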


\begin{proof}
	Let $G$ be a graph satisfying the assumptions of the lemma. Assume for a contradiction 
	that there is a vertex $a$ such that $c(a) \leq 3$. Consider a breadth-first-search tree of $G$ rooted at $a$, with the
	levels $N_0,\ldots,N_p$, where $p$ is the eccentricity of $a$. Then $N_i=N_i(a)$.

	Observe first that $p\ge 2$. Indeed, if $p=1$, then for every $x\in V \setminus \set{a}$, we have
	$N[x]\subseteq N[a]$. Therefore, $G$ would contain only one vertex, namely the vertex $a$,
	which contradicts the 2-connectivity of $G$. 	

	Note also that for every $i$, $1\le i\le p-1$, we have $|E(N_i,N_{i+1})|\ge |N_{i+1}|$, and since $G$ is 2-connected, we have also
	$|N_i|\ge 2$. In what follows, for every $x\in V\setminus\{a\}$, we denote by $f(x)$
	the number such that $x \in N_{f(x)}$. When the vertex $x$ is clear from the context, we sometimes refer
	to the level $N_{f(x)-1}$ as the \textit{previous level}. 

	We say that a vertex $x$ has the property $P_1$ if it is adjacent to two vertices in the level $N_{f(x)-1}$. 
	Moreover, if there is a pair of adjacent vertices $x_1,x_2$ in a same level each having precisely one neighbor $y_1,y_2$, respectively, in the previous level,
	and $y_1 \ne y_2$, then both these vertices have the property $P_2$.

	In what follows we prove several claims about $G$.

	\begin{claim}
		\label{cl:P1}
		There is at most one vertex x with the property $P_1$.
		In the case when such a vertex $x$ exists, it has exactly two neighbors in $N_{f(x)-1}$. 
	\end{claim}

	\begin{proofclaim}
		Let $x$ be a vertex distinct from $a$, and let $y_1,\ldots,y_{\ell}$ be the neighbors of $x$ in $N_{f(x)-1}$.
		Suppose that $\ell \geq 2$ (this implies that $f(x) \ge 2$).
		For each $i \in \{1,\ldots, \ell \}$, let $z_i$ be a neighbor of $y_i$ in $N_{f(x)-2}$.
		Obviously, $z_i$'s need not be distinct.
		Anyway, similarly as $(C2)$ in the proof of Lemma~\ref{lem:induction}, 
		$$
			\eta(a,x) = g(a,x)-d(a,x) \geq 2\cdot (\ell-1),
		$$ 
		since all the edges $xy_i,y_iz_i$ are good for the pair $\{a,x\}$
		and there is a shortest path between $a$ and $x$ containing at most two of these edges. 
		Since $c(a)\le 3$, no vertex of $G$ has three neighbors in the previous level,
		and at most one has two neighbors in the previous level. This establishes the claim.
	\end{proofclaim}

	\begin{claim}
		\label{cl:P2}
		There is at most one pair of vertices $\{x,y\}$ with the property $P_2$.
		If such a pair exists, then there is no vertex $z\in V\setminus\{x,y\}$ with the property $P_1$.
	\end{claim}

	\begin{proofclaim}
		Let $x$ and $y$ be a pair of adjacent vertices with the property $P_2$. Then, there exist two distinct vertices 
		$w,z$ in $N_{f(x)-1}$ such that $x$ is adjacent to $w$ and $y$ is adjacent to $z$.
		We make no assumption about whether $x$ is adjacent to $z$, but by
		Claim~\ref{cl:P1}, we may assume that $y$ is not adjacent to $w$.
		Therefore the edge $wx$ is good for $\{a,y\}$.
		If $x$ is not adjacent to $z$, then the edge $yz$ is good for $\{a,x\}$, and
		if $x$ is adjacent to $z$, then analogously as in Claim~\ref{cl:P1},
		there are two edges around $z$, which are good for $\{a,x\}$.
		Anyway, $g(a,x)-d(a,x)\ge 1$ and $g(a,y)-d(a,y)\ge 1$.

		Now assume that we have two such pairs $\{x_1,y_1\}$ and $\{x_2,y_2\}$.
		If the two pairs are disjoint, then we have $\eta(a,u)\geq 1$ for each
		$u \in \{x_1,x_2,y_1,y_2\}$, which contradicts $c(a)\leq 3$.
		If they are not disjoint, then without loss of generality we may assume $x_1=x_2$,
		in which case $y_1\ne y_2$.
		From the previous analysis, we have $\eta(a,y_1)\geq 1$ and $\eta(a,y_2)\geq 1$.
		We claim that $g(a,x_1)-d(a,x_1)\geq 2$.
		Indeed, either $x_1$ has two neighbors in the previous level, or $x_1$ has
		exactly one neighbor in the previous level and for $z_1$ and $z_2$ being
		respective neighbors of $y_1$ and $y_2$ in the previous level, both $y_1z_1$
		and $y_2z_2$ are good for $\set{a,x_1}$. Hence the first conclusion holds.

		Finally, if there is a vertex $z\in V\setminus\{x,y\}$ with two neighbors in the
		previous level, then the conclusion follows as $\eta(a,z)\ge 2$, and hence $c(a) \ge 4$.
		This establishes the claim.
	\end{proofclaim}

	\begin{claim}
		\label{cl:level_p}
		In every connected component of $G[N_p]$ there is either a vertex with the property $P_1$ 
		or a pair of adjacent vertices which have the property $P_2$.
	\end{claim}

	\begin{proofclaim}
		Let $C$ be a connected component of $G[N_p]$.
		Suppose first that no vertex in $C$ has $P_1$.
		Then, since $G$ is $2$-connected, every vertex of $C$ has a neighbor in $N_p$.
		If there exists an edge $e=uv$ such that $u,v$ are not both adjacent to the
		same unique neighbor of $N_{p-1}$, then $u$ and $v$ have property $P_2$ and
		we are done.
		Hence, the endvertices of each edge $e$ of $C$ are both adjacent
		to a unique vertex $x_e\in N_{p-1}$.
		But, since $C$ is connected, all the $x_e$'s coincide and thus a
		unique vertex $x\in N_{p-1}$ is adjacent to all the vertices in
		$C$.  Since $p \ge 2$, $x$ is a cut-vertex of $G$ and so $G$ is
		not $2$-connected, a contradiction.

		Now, suppose that there is a vertex $x^*$ with $P_1$ and a pair
		of adjacent vertices $x_1,x_2$ with $P_2$ in $C$.
		By Claim~{\ref{cl:P2}}, $x^*\in\{x_1,x_2\}$.
		Without loss of generality we may assume that $x^*=x_1$.
		Denote by $y_1$ and $y_2$ the two distinct neighbors of $x_1$ in $N_{p-1}$,
		and denote by $y_3$ the unique neighbor of $x_2$ in $N_{p-1}$.
		If $y_3\notin\{y_1,y_2\}$, then $x_1y_1,x_1y_2$ are good for
		$\{a,x_2\}$ and $x_1y_1,x_1y_2,x_2y_3$ are good for $\{a,x_1\}$, 
		which gives $g(a,x_2)-d(a,x_2)\ge 2$ and $g(a,x_1)-d(a,x_1)\ge 2$
		and consequently $c(a)\ge 4$, a contradiction.
		Hence, $y_3\in\{y_1,y_2\}$.
		By Claim~{\ref{cl:P1}}, every neighbor of $x_2$ in $N_p$, other than $x_1$,
		is adjacent to a unique vertex of $N_{p-1}$.
		By Claim~{\ref{cl:P2}}, this vertex must be $y_3$, meaning that
		$N[x_2]\subseteq N[y_3]$, which contradicts the assumption.
	\end{proofclaim}

	By the previous claims, the vertices with the properties $P_1$ or $P_2$ are only in $N_p$.
	Moreover, there is either one vertex with $P_1$ or a pair of adjacent vertices with $P_2$
	and $N_p$ has only one component. Next claim deals with vertices which have neither $P_1$ nor $P_2$.

	\begin{claim}
		\label{cl:noPs}
		Let $u$ be a vertex which has neither $P_1$ nor $P_2$.
		Then $u$ has a neighbor in $N_{f(u)+1}$.
	\end{claim}

	\begin{proofclaim}
		By way of contradiction, assume that $u$ has no neighbors in $N_{f(u)+1}$.
		Then $u\ne a$.
		Since $u$ does not have $P_1$, it has a unique neighbor, say $v$, in $N_{f(u)-1}$.
		Since $G$ is 2-connected, $u$ must have a neighbor in $N_{f(u)}$.
		Let $w$ be a neighbor of $u$ in $N_{f(u)}$.
		If $vw\notin E$, then $u$ and $w$ have $P_2$, a contradiction.
		Hence, $vw\in E$.
		Consequently, $N[u]\subseteq N[v]$ contradicting the assumption.
	\end{proofclaim}

	By Claims~{\ref{cl:level_p}} and~\ref{cl:noPs}, every vertex in $N_p$ has either $P_1$ or $P_2$. 
	We consider the two cases separately.
	\medskip

	{\bf Case 1.}
	{\it There is $x\in N_p$ with the property $P_1$.} 
	By Claims~\ref{cl:P1} and~\ref{cl:P2}, $x$ is the only vertex in $G$ with the properties $P_1$ or $P_2$.
	Hence, by Claims~{\ref{cl:level_p}} and~\ref{cl:noPs}, we have $N_p=\{x\}$.
	Denote by $y_1$ and $y_2$ the two neighbors of $x$ in $N_{p-1}$.
	If there is $y^*\in N_{p-1} \setminus \set{y_1,y_2}$, 
	then it has neither $P_1$ nor $P_2$ by Claims~{\ref{cl:P1}} and~\ref{cl:P2},
	and so by Claim~{\ref{cl:noPs}}, $y^*$ has a neighbor in $N_p=\{x\}$,
	a contradiction.
	Thus, $N_{p-1}=\{y_1,y_2\}$.
	Let $z_i$ be a neighbor of $y_i$ in $N_{p-2}$, $1\le i\le 2$.

	First, suppose that $z_1\ne z_2$.
	Then $p\ge 3$.
	Let $b_i$ be the unique neighbor of $z_i$ in $N_{p-3}$, where $1\le i\le 2$.
	Then all the edges $b_1z_1,z_1y_1,y_1x,b_2z_2,z_2y_2,y_2x$ are good
	for $\{a,x\}$ which gives $\eta(a,x)\ge 3$.
	If $y_1y_2\in E$, then we have $N[x]\subseteq N[y_1]$, which
	contradicts the assumption; thus we have $y_1y_2\notin E$, which means that
	$y_1x$ is good for $\{a,y_2\}$.
	Hence $g(a,y_2)-d(a,y_2)\ge 1$ and consequently $c(a)\ge 4$, a contradiction.

	Thus, suppose that $z_1=z_2$.
	If $|V|>4$, then $z_1$ is a cut-vertex which contradicts the
	$2$-connectivity of $G$.
	Hence, $|V|=4$.
	Then $G$ is either $C_4$ or $C_4$ with a chord, depending on whether
	$y_1y_2$ is or is not in $E$.
	However, if $y_1y_2\in E$ then $N[y_1]\subseteq N[y_2]$, while if $G$ is $C_4$
	then $c(a)=4$ for every $a\in V(C_4)$; both these cases contradicting the
	assumptions.
	\medskip

	{\bf Case 2.}
	{\it There is a pair of adjacent vertices $x_1,x_2\in N_p$ with the property $P_2$.}
	Similarly as above, $N_p=\{x_1,x_2\}$.
	By Claim~{\ref{cl:level_p}}, both $x_1$ and $x_2$ have a unique neighbor in $N_{p-1}$.
	Let $y_i$ be the neighbor of $x_i$ in $N_{p-1}$, where $1\le i\le 2$.
	If $y_1 = y_2$, then $y_1$ is a cut-vertex which contradicts the $2$-connectivity of $G$. Hence $y_1 \neq y_2$.
	By Claim~{\ref{cl:P2}}, each $y_i$ has a unique neighbor $z_i$ in $N_{p-2}$.
	Analogously as in Case~1, we have $N_{p-1}=\{y_1,y_2\}$.

	Suppose that $z_1\ne z_2$.
	Then the edges $z_1y_1,y_1x_1$ are good for $\{a,x_2\}$ and 
	$z_2y_2,y_2x_2$ are good for $\{a,x_1\}$ which gives $\eta(a,x_2)\ge 2$
	and $\eta(a,x_1)\ge 2$ and consequently $c(a)\ge 4$, a contradiction.

	Thus, suppose that $z_1=z_2$.
	If $|V|>5$ then $z_1$ is a cut-vertex which contradicts the
	$2$-connectivity of $G$. Hence, $|V|=5$.
	Then $G$ is either $C_5$ or $C_5$ with a chord, depending on whether
	$y_1y_2$ is or is not in $E$. Since in the case $y_1y_2\in E$ we have $N[z_1]\subseteq N[y_1]$, $G$ is $C_5$.
\end{proof}


\section{Proof of Theorem~\ref{th:main3}}
	\label{sec:main}

We start with a lemma about $\eta(K^2_n)$ and $\eta(K^{n-2}_n)$.

\begin{lemma}
	\label{lem:eta(KK)}
	We have $\eta(K^2_n)=\eta(K^{n-2}_n)=2n-6$.
\end{lemma}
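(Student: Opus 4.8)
The plan is to compute both quantities directly using the good-edge formulation, namely $\eta(H)=\sum_{\{a,b\}\subseteq V}\eta(a,b)$ with $\eta(a,b)=g(a,b)-d(a,b)$, and to exploit the highly symmetric structure of $K_n^2$ and $K_n^{n-2}$. Recall that $K_n^t$ is $K_{n-1}$ together with one extra vertex $u$ adjacent to exactly $t$ of the $n-1$ clique vertices. The key observation is that in these graphs almost every pair of vertices lies at distance $1$ or $2$ and has $g(a,b)=d(a,b)$, so the entire surplus $\eta$ is concentrated on a small, explicitly identifiable collection of pairs. First I would analyze $K_n^2$, then reduce the $K_n^{n-2}$ case to it.

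For $K_n^{2}$, write $u$ for the degree-$2$ apex vertex with neighbours $w_1,w_2$, and let $C=V\setminus\{u\}$ be the clique $K_{n-1}$ (which contains $w_1,w_2$). The pairs split into three types. \emph{(i)} Pairs within the clique $C$ are at distance $1$ and each such edge $ab$ is good only along the shortest path, so one checks $g(a,b)=1=d(a,b)$ and $\eta(a,b)=0$; here the only subtlety is whether the apex $u$ contributes a sidestep, but since $u$ sees at most two clique vertices it is never strictly closer to one endpoint via a detour, giving no surplus. \emph{(ii)} Pairs $\{u,x\}$ with $x\in C$: if $x\in\{w_1,w_2\}$ then $d=1$ and $\eta=0$; if $x\notin\{w_1,w_2\}$ then $d(u,x)=2$ and I would count the good edges explicitly. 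The good edges for $\{u,x\}$ are exactly those $e=pq$ with $p$ strictly closer to $u$ and $q$ strictly closer to $x$; this yields the edges $uw_1,uw_2$ together with $w_1x,w_2x$, so $g(u,x)=4$ while $d(u,x)=2$, hence $\eta(u,x)=2$. \emph{(iii)} The single pair $\{w_1,w_2\}$ contributes $0$ if $w_1w_2\in E$, which it is. Summing, only the $n-3$ vertices of $C\setminus\{w_1,w_2\}$ give surplus $2$ each when paired with $u$, so $\eta(K_n^2)=2(n-3)=2n-6$.

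For $K_n^{n-2}$, here $u$ is adjacent to all but one clique vertex; call the unique non-neighbour $x_0\in C$. The graph is the complement-type situation: now $u$ is almost dominating. I expect the symmetric computation to again isolate the surplus on a short list of pairs. The pair $\{u,x_0\}$ is at distance $2$ (routed through any common neighbour, of which there are $n-2$), and counting good edges $g(u,x_0)=2(n-2)$ whereas $d(u,x_0)=2$, so $\eta(u,x_0)=2n-6$; all other pairs are at distance $1$ with no surplus, again totalling $2n-6$. The main obstacle, and the step I would carry out most carefully, is the good-edge bookkeeping for the single ``far'' pair in each graph: one must verify that every claimed good edge genuinely satisfies the strict inequalities $d(a,u)<d(a,v)$ and $d(b,v)<d(b,u)$ and that no further good edges have been overlooked, since an off-by-one here directly shifts the final constant. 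A clean way to organize this is to use formula~(\ref{eq:good}) in its edge form $\Sz=\sum_{uv\in E}n_{uv}(u)\,n_{uv}(v)$, computing each factor $n_{uv}(\cdot)$ from the known distance layers of the two graphs, and to compute $W$ directly as $\binom{n}{2}+(\text{number of distance-two pairs})$; then $\eta=\Sz-W$ falls out with the arithmetic confined to a couple of lines.
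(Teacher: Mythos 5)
Your proposal is correct and follows essentially the same route as the paper: both arguments observe that in $K_n^2$ and $K_n^{n-2}$ every pair except those consisting of the minimum-degree vertex $u$ and a vertex at distance $2$ from it has $\eta(a,b)=0$, and then count the good edges for those remaining pairs ($n-3$ pairs contributing $2$ each for $K_n^2$, one pair contributing $2(n-3)$ for $K_n^{n-2}$). The good-edge counts you give ($g(u,x)=4$ and $g(u,x_0)=2(n-2)$) are exactly right, so the computation closes as claimed.
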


\begin{proof}
	Let $G\in\{K^2_n,K^{n-2}_n\}$. Denote by $u$ a vertex of smallest degree in $G$.
	Since the eccentricity of $u$ is $2$, we have $V(G)=N_0(u)\cup N_1(u)\cup N_2(u)$.
	Since both $G[N_0(u)\cup N_1(u)]$ and $G[N_1(u)\cup N_2(u)]$ are cliques,
	$\eta(x_1,x_2)>0$ for $x_1,x_2\in V(G)$ only if $u\in\{x_1,x_2\}$, say $u=x_1$,
	and the other vertex $x_2$ is in $N_2(u)$.
	In the case $G=K^2_n$, there are $n-3$ vertices $x_2$ in $N_2(u)$ and for
	each of them $\eta(u,x_2)=2$. On the other hand if $G=K^{n-2}_n$, there is a unique vertex $x_2$ in
	$N_2(u)$ and $\eta(u,x_2)=2(n-3)$.
\end{proof}

Now we prove Theorem~{\ref{th:main3}}.
By Lemma~{\ref{lem:eta(KK)}}, $\eta(K^2_n)=\eta(K^{n-2}_n)=2n-6$, and so
Theorem~{\ref{th:main1}} is a consequence of Theorem~{\ref{th:main3}}.

\begin{proof}[Proof of Theorem~\ref{th:main3}.]
	Let $G$ be a minimal counter-example to Theorem~{\ref{th:main3}}.
	Then $G$ is a $2$-connected non-complete graph on $n$ vertices, $n\ge 4$, 
	that is not isomorphic to $K^2_n$ or $K^{n-2}_n$, and Theorem~\ref{th:main3} holds for any smaller graph
	satisfying the assumptions of the theorem. We note that $\eta(C_5)=5$, thus $G$ is not isomorphic to $C_5$.

	By combining Lemma~\ref{lem:induction} and Lemma~\ref{lem:K2n}, we infer that for any two vertices $u,v$ in $G$ such that $N[u]\subseteq N[v]$, 
	the graph $G-u$ is not $2$-connected or is complete.
	From Lemma~\ref{lem:disconnectivity}, we have that there is a pair of vertices $u,v$ in $G$ such that $N[u]\subseteq N[v]$.
	Lemma~\ref{lem:nopartialtruetwins} guarantees that $G-u$ is not $2$-connected ($v$ being the only cut-vertex in $G-u$) 
	and no block of $G-u$ is complete or isomorphic to $K^2_t$ or $K^{t-2}_t$ for any $t \in \mathbb{N}$. Let $C_1,\ldots,C_k$ be the blocks of $G-u$. 
	Note that $k\geq 2$ and recall that $v$ belongs to each of the blocks as $N[u]\subseteq N[v]$ and $G$ is $2$-connected.
	Let $G_i=G[V(C_i)\cup\{u\}]$ for every $i$, $1 \le i \le k$. Note that each $G_i$ is a proper subgraph of $G$, 
	it is $2$-connected, and neither complete nor isomorphic to $K^2_t$ or $K^{t-2}_t$ for any $t$ (otherwise $C_i$ is complete, or isomorphic to $K_{t-1}^{t-3}$ or $K_{t-1}^2$, a contradiction).
	By minimality of $G$, we have $\eta(G_i) \geq 2|V(C_i)|-5$ for every $i$.

	We claim that 
	$$
		\eta(G) \geq \sum_{i=1}^k \eta(G_i)+ \sum_{\{a,b\}}\big(g(a,b)-d(a,b)\big),
	$$ 
	where the second sum runs over vertices $a,b$ from distinct blocks of $G-u$.
	This follows from the fact that distances for vertices in $G_i$ are left unchanged in $G$.
	Note that since $u$ and $v$ are counted in each $G_i$, it holds that 
	$$
	\sum_{i=1}^k \eta(G_i) \geq \sum_{i=1}^k \big(2|V(G_i)|-5\big) = 2n+2 \cdot 2 \cdot (k-1)-5k=2n-k-4.
	$$ 
	Since $G$ is $2$-connected, the vertex $u$ has a neighbor $w_i$
	(distinct from $v$) in each block $C_i$.
	Since $N[u]\subseteq N[v]$, we have
	$g(w_i,w_j)-d(w_i,w_j) = 2$ for every $i \neq j$.
	Therefore 
	$$
	\sum_{\{a,b\}}\big(g(a,b)-d(a,b)\big) \geq 2\, \binom{k}{2}, 
	$$
	where again the sum runs over vertices $a,b$ from distinct blocks.
	In total, to obtain $\eta(G) \geq 2n-5$, all we need is
	$2 \binom{k}{2} - k-4 \geq -5$, i.e. $\binom{k}{2} \geq \frac{k-1}2$.
	This is always true as $k \geq 2$.
\end{proof}


\section{Additional results}
\label{sec:add}

In this section we continue our analysis in order to prove several additional results regarding the difference between the Szeged and
the Wiener index. Recall that $c_G(a)$ is a contribution of $a$ to $\eta(G)$, see \eqref{eq:c} and \eqref{eq:eta_c}.

\begin{lemma}
	\label{th:blockdecomposition}
	Let $G$ be a connected graph with the blocks $B_1,\ldots,B_k$.
	Let $p_1,\ldots,p_k$ be integers such that for every $i$,
	with $1 \leq i \leq k$, it holds that $c_{G[B_i]}(u)\geq p_i$ for every vertex $u \in V(B_i)$. 
	Then $c_G(u)\geq \sum_{1 \leq i \leq k} p_i$ for every vertex $u \in V$. In particular, 
	$$
		\eta(G) \geq \frac{n}{2} \sum_{i = 1}^k p_i.
	$$
\end{lemma}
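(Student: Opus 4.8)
The plan is to prove the statement about contributions $c_G(u)$ first, and then derive the inequality on $\eta(G)$ as a routine consequence. The key structural fact I would exploit is the one already used throughout the paper: for a pair of vertices $\{a,b\}$ lying in a single block $B_i$, both the distance $d(a,b)$ and the number of good edges $g(a,b)$ are the same whether computed in $G[B_i]$ or in all of $G$, since every path between two vertices of a block that leaves the block must return through the same cut-vertex and hence cannot be shorter, and good edges for $\{a,b\}$ must lie on geodesics between them, which stay inside $B_i$. This means $\eta_G(a,b) = \eta_{G[B_i]}(a,b)$ for any pair inside a common block.

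The heart of the argument is to show $c_G(u) \geq \sum_{i=1}^k p_i$ for a fixed vertex $u$. First I would decompose the contribution $c_G(u) = \sum_{b \in V} \eta_G(u,b)$ by grouping the vertices $b$ according to the block structure. For each block $B_i$, I want to produce a quantity that is at least $p_i$ and is ``localized'' near $B_i$. The natural candidate comes from the fact that distances in a block graph add along the block-cut tree: for any vertex $b$, the geodesic from $u$ to $b$ passes through a well-defined sequence of blocks and cut-vertices. The technical core is to argue that for each block $B_i$, there is a cut-vertex (or $u$ itself, if $u \in B_i$) through which $u$ ``enters'' $B_i$, call it $u_i$; the contribution collected by pairs whose good-edge activity happens inside $B_i$ is at least $c_{G[B_i]}(u_i) \geq p_i$. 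Concretely, for a vertex $b$ whose geodesic from $u$ first traverses $B_i$ at the portion between $u_i$ and the next cut-vertex $u_i'$, the good edges for $\{u,b\}$ lying in $B_i$ are exactly the good edges for $\{u_i, u_i'\}$ in $G[B_i]$, and $\eta_G(u,b)$ accumulates at least $\eta_{G[B_i]}(u_i, w)$ for the appropriate representative $w$.

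The cleanest route, which I expect to be what the authors intend, is to observe that the ``good-edge minus distance'' excess is additive across blocks along any $u$--$b$ geodesic: if the path visits blocks $B_{i_1}, \ldots, B_{i_m}$, then $\eta_G(u,b) = \sum_j \eta_{G[B_{i_j}]}(a_j, b_j)$ where $a_j, b_j$ are the entry and exit cut-vertices in block $B_{i_j}$. Summing over all $b \in V$ and reorganizing so that each block $B_i$ collects the contributions of pairs routed through it, one recovers $\sum_b \eta_G(u,b) \geq \sum_{i=1}^k c_{G[B_i]}(u_i) \geq \sum_{i=1}^k p_i$, where $u_i$ is the image of $u$ under the natural retraction onto block $B_i$. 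The main obstacle is bookkeeping: I must verify that every block $B_i$ is genuinely ``traversed'' enough to guarantee its full contribution $c_{G[B_i]}(u_i)$ is collected, rather than only the contribution of vertices reached beyond $B_i$. This is where the choice of $u_i$ as the entry cut-vertex, together with the fact that $c_{G[B_i]}(u_i)$ sums over \emph{all} vertices of $B_i$ (each of which sits on some $u$--$b$ geodesic for a suitable $b$ in or beyond $B_i$), becomes essential.

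Finally, the displayed bound $\eta(G) \geq \frac{n}{2}\sum_{i=1}^k p_i$ follows immediately from the per-vertex estimate via equation~\eqref{eq:eta_c}: since $c_G(u) \geq \sum_{i=1}^k p_i$ holds for every one of the $n$ vertices $u$, we have $\eta(G) = \frac{1}{2}\sum_{a \in V} c_G(a) \geq \frac{1}{2} \cdot n \cdot \sum_{i=1}^k p_i$. No further work is needed for this last step.
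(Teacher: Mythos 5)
Your proposal follows essentially the same route as the paper's proof: the paper fixes $u$, takes for each block $B_i$ the vertex $w_i$ of $B_i$ nearest to $u$ (the entry cut-vertex, or $u$ itself for the block containing $u$), observes that every good edge for $\{w_i,v\}$ is also good for $\{u,v\}$ while $d(u,v)=d(u,w_i)+d(w_i,v)$, hence $\eta(u,v)\ge \eta(w_i,v)$ for every $v\in V(B_i)$, and then sums over $i$ and $v$; your ``additivity of $\eta$ along the block path'' is a more elaborate packaging of the same inequality, and the bookkeeping issue you raise is resolved exactly as you suggest, by taking $b=w$ for each $w\in V(B_i)\setminus\{u_i\}$, so that block $B_i$ contributes the full $c_{G[B_i]}(u_i)\ge p_i$. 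One justification in your write-up is false, however: good edges for $\{a,b\}$ need \emph{not} lie on geodesics between $a$ and $b$. In $C_5$ with $d(a,b)=2$, the edge antipodal to the $a$--$b$ geodesic is good for $\{a,b\}$ (this is precisely why $\eta(C_5)=5>0$; see Figure~\ref{fig:c5}). The correct reason that $g_{G[B_i]}(a,b)=g_G(a,b)$ for $a,b$ in a common block $B_i$ is the cut-vertex separation argument: any edge $xy$ outside $B_i$ is separated from $B_i$ by a single cut-vertex $c$, so $d(a,x)-d(a,y)=d(c,x)-d(c,y)=d(b,x)-d(b,y)$, and $xy$ cannot be good for $\{a,b\}$. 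With that repair (which is also what makes your additivity claim valid), the argument is sound and coincides with the paper's.
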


\begin{proof}
	Let $u$ be a vertex of $G$. Without loss of generality, assume that $u\in V(B_1)$.
	Then $\sum_{v\in V(B_1)\setminus\{u\}} \eta(u,v)= \sum_{v\in V(B_1)\setminus\{u\}} \big( g(u,v)-d(u,v) \big) \ge p_1$.
	Now choose a block $B_i$, $2\le i\le k$. 
	Let $v\in V(B_i)$ and let $w$ be a vertex of $B_i$ which is at the shortest distance from $u$. Then $w$ is a cut-vertex of $G$. 
	Therefore, every edge $xy$ being good for $\{w,v\}$ is also good for $\{u,v\}$,
	which implies $\sum_{v\in V(B_i)\setminus\{w\}} \eta(u,v)\ge \sum_{v\in V(B_i)\setminus\{w\}} \eta(w,v) \ge p_i$.
	Consequently, we infer $c_G(u)\ge\sum_{1\le i\le k} p_i$.
\end{proof}

Lemmas~\ref{lem:disconnectivity} and~\ref{th:blockdecomposition} together imply the following theorem.

\begin{theorem}
	\label{cor:five}
	Let $G$ be a connected graph with at least two blocks, neither of which is isomorphic to $C_5$ or a complete graph.
	Moreover, let $G$ do not contain two vertices $u,v$ with $N[u]\subseteq N[v]$. Then 
	$$
		\eta(G) \geq 4n.
	$$
\end{theorem}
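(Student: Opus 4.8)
The plan is to combine the two cited results almost mechanically, so the work is really just checking that their hypotheses are met by each block. First I would apply Lemma~\ref{th:blockdecomposition}, which reduces a lower bound on $\eta(G)$ to a uniform lower bound $p_i$ on the contribution $c_{G[B_i]}(u)$ inside each block. So the entire task becomes: show that $c_{G[B_i]}(u) \geq 4$ for every vertex $u$ of every block $B_i$, since then $\sum_i p_i \geq 4k \geq 8$ when $k\geq 2$, and in fact $c_G(u) \geq 8$ for every $u$ — but more to the point, feeding $p_i = 4$ into the displayed inequality of the lemma would give $\eta(G) \geq \tfrac{n}{2}\sum_i p_i \geq \tfrac{n}{2}\cdot 4k$. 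With $k \geq 2$ blocks this yields $\eta(G)\geq 4n$, which is exactly the target.

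The key observation supplying the per-block bound is Lemma~\ref{lem:disconnectivity}: if a $2$-connected non-complete graph $H$, distinct from $C_5$, has no two vertices $u,v$ with $N[u]\subseteq N[v]$, then $c_H(u)\geq 4$ for every vertex $u$. So the second step is to verify that each block $B_i$, viewed as the induced graph $G[B_i]$, satisfies these hypotheses. By assumption every block is $2$-connected (a block with at least two blocks present and none complete cannot be a single edge), none is complete, and none is $C_5$. The only subtlety is the ``no two vertices with $N[u]\subseteq N[v]$'' condition: this is assumed for $G$ as a whole, and I would argue it passes to each block. Indeed, if $u,v \in V(B_i)$ had $N_{B_i}[u]\subseteq N_{B_i}[v]$, then since $u$ is an internal (non-cut) vertex of $B_i$ relative to the rest of $G$ — any neighbor of $u$ outside $B_i$ would make $u$ a cut-vertex contradicting $2$-connectivity of the block — all neighbors of $u$ in $G$ lie in $B_i$, so $N_G[u]\subseteq N_G[v]$, contradicting the global hypothesis.

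With both hypotheses verified I can set $p_i = 4$ for every $i$ and conclude. The main obstacle, modest as it is, will be handling the edge case of where $u$ itself sits: Lemma~\ref{th:blockdecomposition} requires the per-block bound to hold for \emph{every} vertex of \emph{every} block, including cut-vertices shared between blocks, and Lemma~\ref{lem:disconnectivity} indeed delivers $c_{G[B_i]}(u)\geq 4$ for all $u\in V(B_i)$, so no vertex is left uncovered. I would also double-check the arithmetic in the final line of Lemma~\ref{th:blockdecomposition}: since $c_G(u)\geq \sum_i p_i \geq 4k \geq 8$ for every $u$, summing over all $n$ vertices and halving gives $\eta(G) = \tfrac12\sum_u c_G(u) \geq \tfrac12 \cdot n \cdot 4k$, which for $k\geq 2$ is at least $4n$. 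The only thing to be careful about is not to overclaim: the bound $4n$ uses $k\geq 2$ essentially, so I would state explicitly that the hypothesis of at least two blocks is what forces $\sum_i p_i \geq 8$ and hence the factor $4$ rather than $2$.
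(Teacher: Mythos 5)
Your overall route is exactly the paper's: the paper proves this theorem in one sentence by combining Lemma~\ref{lem:disconnectivity} (to get $p_i=4$ on each block) with Lemma~\ref{th:blockdecomposition} (to sum the contributions and divide by two), and your arithmetic $\eta(G)\ge \tfrac n2\cdot 4k\ge 4n$ for $k\ge 2$ is the intended one. The problem is in the one step you (rightly) identified as the only subtlety and then justified incorrectly: the transfer of the ``no two vertices with $N[u]\subseteq N[v]$'' hypothesis from $G$ to each block $G[B_i]$. Your argument is that a neighbor of $u$ outside $B_i$ ``would make $u$ a cut-vertex contradicting $2$-connectivity of the block.'' That is not a contradiction: a vertex of a $2$-connected block can perfectly well have neighbors in other blocks --- it is then a cut-vertex of $G$, and graphs with at least two blocks have cut-vertices. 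For such a vertex $u$, $N_{B_i}[u]\subseteq N_{B_i}[v]$ does not imply $N_G[u]\subseteq N_G[v]$, so the global hypothesis does not force the per-block hypothesis of Lemma~\ref{lem:disconnectivity}.

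This is not merely a cosmetic slip: the inheritance can actually fail under the theorem's stated hypotheses. Take $B_1\cong K_4^2$ (a $C_4$ plus a chord), which is $2$-connected, non-complete and not $C_5$, and attach a copy of $C_4$ at each of its four vertices. Every vertex of $B_1$ is then a cut-vertex with private outside neighbors, so $G$ contains no pair with $N_G[u]\subseteq N_G[v]$ and satisfies all hypotheses of the theorem; yet inside $B_1$ the two degree-$2$ vertices are dominated and one checks $c_{B_1}(v)=0$ for the degree-$3$ vertices, so Lemma~\ref{lem:disconnectivity} cannot supply $p_1=4$ (the conclusion still holds for this $G$, but only via a different choice of the $p_i$). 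To be fair, the paper's own proof is a bare assertion that the two lemmas combine and silently relies on the same transfer; but since you made the step explicit, you need a correct argument there --- e.g., restricting the domination check to non-cut vertices of each block and handling cut-vertices separately, or reading the hypothesis as a per-block condition --- before the proof is complete.
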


The following two theorems have been conjectured by the AutoGraphiX computer program~\cite{Han10}, and  
confirmed in~\cite{CheLiLiu14} and~\cite{CheLiLiu14,CheLiLiuGut12}, respectively.

\begin{theorem}[Chen et al., 2014]
	\label{con:girth}
	Let $G$ be a connected graph with $n \ge 5$ vertices, at least one odd cycle, and girth at least 5.
	Then
	$$
		\eta(G) \ge 2n - 5.
	$$
	Moreover, the equality holds if and only if the graph is composed of $C_5$ and 
	a tree rooted at a vertex of $C_5$ or two trees rooted at two adjacent vertices of $C_5$.
\end{theorem}

\begin{figure}[ht]
	$$
		\includegraphics{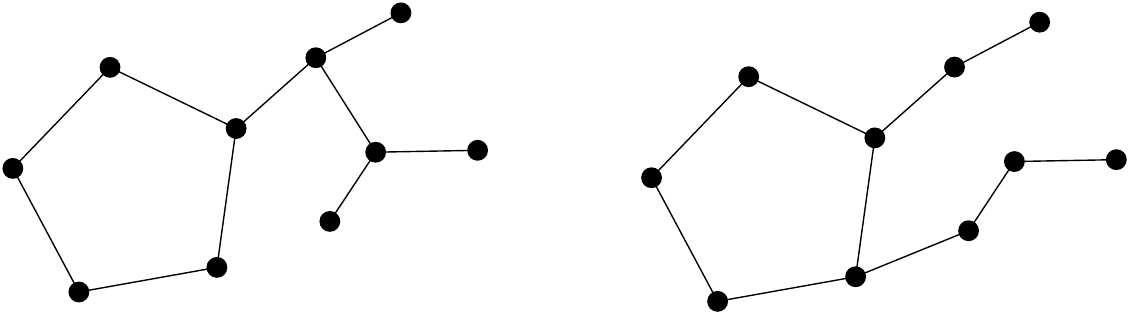}
	$$
	\caption{Example of two graphs for which the difference between their Szeged and Wiener index attains $2n-5$.}
	\label{fig:tightGirth4}
\end{figure}

\begin{theorem}[Chen et al., 2012, Chen et al., 2014]
	\label{con:bip}
	Let $G$ be a connected bipartite graph with $n \ge 4$ vertices and $m \ge n$ edges.
	Then
	$$
		\eta(G) \ge 4n - 8.
	$$
	Moreover, the equality holds if and only if the graph is composed of $C_4$ and a tree rooted at a vertex of $C_4$.
\end{theorem}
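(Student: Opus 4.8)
The plan is to exploit the structure forced by bipartiteness together with the hypotheses $n \ge 4$ and $m \ge n$, and to separate the argument according to the block decomposition of $G$, mirroring the strategy used for Theorem~\ref{con:girth}. The crucial feature of bipartite graphs is that they contain no odd cycles, so every non-complete $2$-connected block has order at least $4$ and, being triangle-free, cannot be $K_n^2$, $K_n^{n-2}$, or $C_5$. In particular, the only complete blocks are single edges ($K_2$), and any nontrivial $2$-connected block $B$ satisfies $|V(B)| \ge 4$. The condition $m \ge n$ guarantees that $G$ is not a tree, hence it has at least one $2$-connected block.

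The key step is to obtain a strong per-block contribution bound. For a $2$-connected triangle-free block $B$ on $n_B$ vertices, I would show that $c_{G[B]}(u) \ge 8$ for every vertex $u$, or more precisely a bound strong enough to yield $\eta(B) \ge 4n_B - 8$ directly via Theorem~\ref{th:main1}-type reasoning adapted to the girth-$\ge 4$ setting. First I would verify the base case $B = C_4$, where a direct computation gives $\eta(C_4) = 4 \cdot 4 - 8 = 8$, matching the extremal value and confirming the equality case. For larger $2$-connected bipartite blocks, the absence of triangles means that for any non-adjacent pair $\{a,b\}$ at distance $2$ there are at least two internally disjoint paths of length $2$, forcing extra good edges and hence $\eta(a,b) \ge 2$ for many more pairs than in the general case; this is exactly what doubles the coefficient from $2n-6$ to $4n-8$. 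I would run an induction analogous to the proof of Theorem~\ref{th:main3}: identify a vertex $u$ with $N[u] \subseteq N[v]$ (or argue via Lemma~\ref{lem:disconnectivity} when no such pair exists), delete it, and control the change $\eta(G) - \eta(G-u)$, now showing this difference is at least $4$ rather than $2$ because each non-edge in $N(u)$ together with the bipartite structure contributes doubly.

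The hardest part will be the inductive step for dense bipartite blocks, specifically re-proving the analogues of Lemmas~\ref{lem:induction} and~\ref{lem:K2n} with the improved constant. The obstacle is that the twin-deletion argument in Lemma~\ref{lem:induction} only guarantees an increase of $2$ in general; to reach $4n-8$ I must extract an additional contribution of $2$ from the girth condition at each deletion, which requires a careful accounting via contributions $(C2)$ and $(C3)$ showing that every vertex outside $N[u]$ sees at least two shorter neighbors, or that horizontal edges proliferate. Once the block-level bound $\eta(B) \ge 4|V(B)| - 8$ is established for every nontrivial block, I would assemble the global bound using Lemma~\ref{th:blockdecomposition} and Corollary~\ref{th:main2}, noting that trivial blocks contribute $0$ and that the hypothesis $m \ge n$ ensures at least one nontrivial block is present. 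Finally, for the equality characterization I would trace back through the induction: equality propagates only when no deletion step gains more than the bare minimum, which forces a single $C_4$ block with all remaining structure being a tree attached at one vertex, since attaching at two or more distinct $C_4$-vertices, or having a larger $2$-connected block, strictly increases $\eta(G)$ beyond $4n-8$.
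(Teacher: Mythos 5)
There are two genuine gaps in your plan, both in places you flag as routine. First, the inductive engine you propose for large bipartite blocks cannot start: in a triangle-free $2$-connected graph there is \emph{never} a pair $u,v$ with $N[u]\subseteq N[v]$ (if $uv\in E$ and some other neighbor $w$ of $u$ lay in $N[v]$, the triangle $uvw$ would appear, and $u$ cannot have $v$ as its only neighbor by $2$-connectivity). So the analogues of Lemmas~\ref{lem:induction} and~\ref{lem:K2n} that you identify as ``the hardest part'' are vacuous, and you are always in the ``no such pair'' case. Your fallback, Lemma~\ref{lem:disconnectivity}, only gives $c(u)\ge 4$, i.e.\ $\eta\ge 2n$, which is weaker than $4n-8$ for $n>4$. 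What is actually needed is a strengthened breadth-first-search contribution argument showing $c_B(u)\ge 8$ for every vertex of a $2$-connected bipartite block $B\ne C_4$ (the paper's Theorem~\ref{th:ifbipartite}); your stated mechanism for the improvement --- that any distance-$2$ pair has two internally disjoint paths of length $2$ --- is false in general (consider $C_6$), so the doubling has to be extracted differently, namely from the fact that no two vertices in a BFS level are adjacent.

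Second, the assembly step fails for the extremal family itself. If $G$ is $C_4$ with trees attached, summing per-block bounds as in Corollary~\ref{th:main2} gives only $\eta(G)\ge 4\cdot 4-8=8$, and Lemma~\ref{th:blockdecomposition} with $p=4$ on the $C_4$ block gives only $\eta(G)\ge 2n$; neither reaches $4n-8$. The missing contribution comes from pairs of vertices lying in \emph{different} trees hanging off the cycle: for $u\in T_x$, $v\in T_y$ with $x,y$ on the $C_4$, one has $g(u,v)-d(u,v)=1$ if $xy\in E(C_4)$ and $=2$ if $d(x,y)=2$, which yields the quadratic identity
$$
\eta(G)=t_at_b+t_bt_c+t_ct_d+t_dt_a+2(t_at_c+t_bt_d)=\frac{n^2-(t_a-t_c)^2-(t_b-t_d)^2}{2},
$$
minimized exactly when at most one $t_x$ exceeds $1$. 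This explicit computation is unavoidable both for the bound and for the equality characterization; ``tracing back through the induction'' does not apply because there is no induction in the unicyclic case. Your overall architecture (reduce to blocks, handle $C_4$ as the base case, show big bipartite blocks contribute $\ge 4n$) does match the paper's, but these two steps need to be replaced as indicated.
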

Additionally, Klav\v{z}ar and Nadjafi-Arani~\cite{KlaNad14b} extended Theorems~\ref{con:girth} and~\ref{con:bip} in terms of the girth and
the longest isometric cycle, respectively. 

Using our approach, we are able to prove stronger versions of Theorems~\ref{con:girth} and~\ref{con:bip}.
First, we strengthen Theorem~\ref{con:girth}. Notice that the next statement requires girth $4$ instead of $5$.

\begin{theorem}
	\label{th:girth}
	Let $G$ be a connected graph which has a triangle-free $2$-connected block distinct from $C_5$, 
	or at least two blocks isomorphic to $C_5$. Then 
	$$
		\eta(G) \geq 2n.
	$$
\end{theorem}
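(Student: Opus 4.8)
The plan is to reduce the statement to the machinery already built up, most notably Lemma~\ref{lem:disconnectivity} (which gives $c(u)\ge 4$, i.e.\ a per-vertex contribution of at least $4$) and Lemma~\ref{th:blockdecomposition} (which lets one sum per-vertex contributions across the block decomposition). The key observation I would try to establish is that a triangle-free $2$-connected graph distinct from $C_5$, and also $C_5$ itself, each force a contribution of at least $4$ at \emph{every} vertex. The reason to expect this is that in a triangle-free graph no two vertices $u,v$ can satisfy $N[u]\subseteq N[v]$ unless they are adjacent with a very restricted neighborhood structure: if $u\sim v$ then $u\in N[u]\subseteq N[v]$ forces $v\in N(u)$, and triangle-freeness then forces $N(u)\setminus\{v\}$ to be disjoint from $N(v)$, contradicting $N(u)\subseteq N[v]$ as soon as $u$ has another neighbor. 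Hence in a triangle-free $2$-connected graph there are \emph{no} two vertices with $N[u]\subseteq N[v]$, so Lemma~\ref{lem:disconnectivity} applies directly (once $C_5$ is excluded) and yields $c_B(u)\ge 4$ for every vertex $u$ of such a block $B$.

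With that in hand I would set up the block decomposition $B_1,\dots,B_k$ of $G$ and apply Lemma~\ref{th:blockdecomposition} with weights $p_i$. The strategy is to choose the one distinguished block $B_j$ (either the triangle-free $2$-connected block distinct from $C_5$, or one of the two $C_5$ blocks) and assign it $p_j\ge 4$, using either the argument above or Lemma~\ref{lem:disconnectivity} applied to $C_5$ directly, while assigning $p_i\ge 0$ to every other block since $c_{G[B_i]}(u)\ge 0$ always holds (this is just $g\ge d$). Lemma~\ref{th:blockdecomposition} then gives $c_G(u)\ge 4$ for every vertex $u$, whence $\eta(G)=\tfrac12\sum_{a}c(a)\ge \tfrac12\cdot 4n=2n$, which is exactly the claimed bound. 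In the two-$C_5$ case I would instead assign $p\ge 4$ to each of the two $C_5$ blocks if needed, but a single block with $p\ge 4$ already suffices for the $2n$ conclusion, so the second $C_5$ is only there to guarantee that \emph{some} block qualifies.

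The main technical obstacle I anticipate is verifying the hypotheses of the cited lemmas cleanly for the distinguished block. For Lemma~\ref{lem:disconnectivity} I must confirm that the triangle-free $2$-connected block is non-complete (immediate, since a complete graph on $\ge 3$ vertices contains a triangle, and $K_2$ is not $2$-connected) and genuinely has no pair $u,v$ with $N[u]\subseteq N[v]$; the adjacency case of that non-inclusion claim is where I expect to spend the most care, since I must rule out small degenerate configurations. For the $C_5$ case the contribution bound comes either from the explicit computation $\eta(C_5)=5$ combined with symmetry (each of the five vertices contributes $2\cdot 5/5=2$, which is \emph{not} $\ge 4$), so in fact I would not route $C_5$ through Lemma~\ref{lem:disconnectivity} at all; instead I would use that a single $C_5$ block contributes $c_{C_5}(u)=2$ at each vertex, and the hypothesis of two $C_5$ blocks then yields $c_G(u)\ge 2+2=4$ via Lemma~\ref{th:blockdecomposition}, which is precisely why the theorem requires \emph{two} such blocks in that alternative. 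Reconciling these two regimes—one block with contribution $4$ versus two blocks each with contribution $2$—is the delicate bookkeeping step, and stating the weights $p_i$ correctly for each case is what makes the $4n/2=2n$ arithmetic close.
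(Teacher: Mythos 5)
Your proposal is correct and follows essentially the same route as the paper: apply Lemma~\ref{lem:disconnectivity} to the triangle-free non-$C_5$ block (after noting that triangle-freeness rules out any pair with $N[u]\subseteq N[v]$) to get a per-vertex contribution of $4$, use $c_{C_5}(u)=2$ for the two-$C_5$ alternative, and combine via Lemma~\ref{th:blockdecomposition} to get $c_G(u)\ge 4$ and hence $\eta(G)\ge 2n$. Your explicit verification that $N[u]\subseteq N[v]$ forces a triangle in a $2$-connected graph is a detail the paper leaves implicit, and it is argued correctly.
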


\begin{proof}
	Let $B$ be a $2$-connected block other than $C_5$ with girth at least $4$.
	Then $B$ is non-complete and it does not contain two vertices $u,v$ with $N[u]\subseteq N[v]$.
	By Lemma~{\ref{lem:disconnectivity}}, we have $c_B(u)\ge 4$ for every vertex $u\in V(B)$.
	On the other hand, $c_{C_5}(u)\ge 2$ for every vertex $u$ of $C_5$.
	Consequently by Lemma~{\ref{th:blockdecomposition}}, $c_G(u)\ge 4$ for
	every vertex $u$ in a graph $G$ satisfying the assumptions of theorem,
	which means that $\eta(G)\ge 2n$.
\end{proof}

As a corollary, we obtain an alternative proof Theorem~\ref{con:girth}.

\begin{proof}[Alternative proof of Theorem~\ref{con:girth}.]
	By Theorem~\ref{th:girth}, we have
	$$
		\eta(G) \geq 2n > 2n - 5,
	$$
	for every connected graph $G$ with girth at least $4$ and at least one $2$-connected block distinct from $C_5$, or
	at least two blocks isomorphic to $C_5$. So, we may assume that $G$ is a connected (unicyclic) graph with precisely one 
	$2$-connected block isomorphic to $C_5$.
	
	Let $C = abcde$ be the $C_5$ in $G$, and let $T_x$, $x \in \set{a,b,c,d,e}$, be the component of $G \setminus E(C)$
	containing $x$. Denote also $t_x = |V(T_x)|$. Obviously, $g(u,v)-d(u,v) = 0$ for every pair $u,v \in V(T_x)$.	
	Observe also that for any pair of adjacent vertices $x,y \in V(C)$, it holds $g(u,v)-d(u,v) = 0$ for every $u \in V(T_x)$ and $v \in V(T_y)$. 
	On the other hand, if $d(x,y) = 2$, then $g(u,v)-d(u,v) = 1$ for every $u \in V(T_x)$ and $v \in V(T_y)$.
	Since $t_a+t_b+t_c+t_d+t_e=n$, we have
	\begin{align}
		\label{eq:girth}
		\eta(G) & = \Sz(G) - W(G) = t_a t_c + t_b t_d + t_c t_e + t_d t_a + t_e t_b \nonumber \\
				& = n^2-\frac{(t_a+t_b)^2}2-\frac{(t_b+t_c)^2}2 -\frac{(t_c+t_d)^2}2-\frac{(t_d+t_e)^2}2-\frac{(t_e+t_a)^2}2.
	\end{align}
	We leave to the reader to verify that the minimum value of~\eqref{eq:girth} is achieved 
	in the case when at most two trees $T_x$ and $T_y$, $x,y \in \set{a,b,c,d,e}$, have more than one vertex and 
	$x$ is adjacent to $y$. This completes the proof.
\end{proof}

Now we consider bipartite graphs.

\begin{theorem}
	\label{th:ifbipartite}
	If $G$ is a $2$-connected bipartite graph, then either $G$ is isomorphic to $C_4$ 
	or $c(u) \geq 8$ for every vertex $u \in V$. In particular, if $G$ is not isomorphic to $C_4$, then 
	$$
		\eta(G) \geq 4n.
	$$
\end{theorem}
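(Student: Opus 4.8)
The goal is to show that for a $2$-connected bipartite graph $G$, either $G \cong C_4$ or every vertex has contribution at least $8$. The natural strategy is to refine the breadth-first-search analysis used in the proof of Lemma~\ref{lem:disconnectivity}, exploiting the extra structure that bipartiteness provides: in a bipartite graph there are no odd cycles, so within each level $N_i(a)$ of a BFS tree rooted at $a$ there are \emph{no edges at all}. The plan is to fix an arbitrary vertex $a$ and argue directly that $c(a) \geq 8$ whenever $G \not\cong C_4$, after which the bound $\eta(G) \geq 4n$ follows immediately from \eqref{eq:eta_c}.

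First I would set up the BFS levels $N_0 = \{a\}, N_1, \ldots, N_p$ and record the two bipartite consequences: every edge joins consecutive levels (no horizontal edges, since horizontal edges within a level are forbidden and a bipartite graph has no same-parity edges), and consequently an edge $xy$ with $x \in N_{f(x)-1}$, $y \in N_{f(x)}$ is good for $\{a,y\}$ exactly when it lies on no alternative equidistant route. The absence of horizontal edges means the property $P_2$ from Lemma~\ref{lem:disconnectivity} cannot occur: the only mechanism for generating $\eta(a,\cdot)$ contributions is vertices having two neighbors in the previous level (property $P_1$), each such vertex contributing at least $2$ to $c(a)$ by the $(C2)$-type argument. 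So the whole analysis reduces to counting vertices with two or more neighbors in the previous level, and I would show that a $2$-connected bipartite graph other than $C_4$ forces at least four such contributing units.

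The key structural step is to count, using $2$-connectivity, how many vertices must have at least two neighbors in their previous level. Since $G$ is $2$-connected, for every level $N_i$ with $i \geq 1$ the edges between $N_{i-1}$ and $N_i$ cannot all pass through a single vertex of $N_{i-1}$ (otherwise that vertex is a cut-vertex separating $a$ from the deeper levels), which pushes branching and merging structure into the graph. I would argue that the deepest level $N_p$ and the overall cycle structure force enough vertices of property $P_1$: roughly, since $G$ is bipartite and $2$-connected it contains an even cycle through $a$, and the two ``sides'' of this cycle must reconverge, creating vertices with two neighbors in the previous level. A careful accounting — considering whether $p = 2$ (where $G$ contains a complete bipartite-like reconvergence) or $p \geq 3$ — should yield at least four units of contribution, i.e.\ either two distinct $P_1$ vertices (each giving $2$) or one $P_1$ vertex together with additional good edges from deeper reconvergence. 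The exceptional case $C_4$ arises precisely when the minimal reconvergence happens and no further structure is present, giving $c(a) = 4$ rather than $8$.

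The main obstacle I anticipate is handling the low-eccentricity case $p = 2$ cleanly and isolating $C_4$ as the unique exception. When $p = 2$, every vertex of $N_2$ has all its neighbors in $N_1$, and bipartiteness plus $2$-connectivity can be satisfied by $C_4$ with only a single $P_1$ vertex contributing $4$; to reach the bound $8$ for all other graphs I would need to show that any $2$-connected bipartite graph larger than $C_4$ either has a second vertex of $N_2$ with two neighbors in $N_1$, or has $|N_1| \geq 3$ forcing multiple merge points, or has $p \geq 3$ with reconvergence deeper down — and in each subcase verify the count reaches $8$. Keeping the case distinctions exhaustive while showing that $C_4$ is the only graph escaping the stronger bound will require the most care; the individual good-edge counts are routine applications of $(C2)$ and the remarks on good edges from Section~\ref{sec:prel}.
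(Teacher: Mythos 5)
Your overall skeleton is the same as the paper's: root a BFS at an arbitrary vertex $a$, use bipartiteness to rule out intra-level edges (so the $P_2$ mechanism disappears), and reduce everything to vertices with several neighbours in the previous level, with $C_4$ as the degenerate exception. However, your accounting does not close, and there are two concrete gaps.

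First, crediting each $P_1$-vertex with only $2$ via the $(C2)$-type argument and then hoping for ``four such contributing units'' is not enough: $K_{2,3}$ rooted at a vertex of the larger part has exactly two merge vertices, so your count tops out at $4$, yet the theorem demands $8$. The missing idea is the second source of good edges that bipartiteness provides: if $x$ has neighbours $y_1,\dots,y_\ell$ ($\ell\ge 2$) in $N_{f(x)-1}$, then $y_iy_j\notin E$, so for $i\neq j$ the edge $xy_j$ is good for $\{a,y_i\}$ and lies on no shortest $a$--$y_i$ path; hence $\eta(a,y_i)\ge \ell-1$ for each $i$, and the vertex $x$ actually generates at least $2(\ell-1)+\ell(\ell-1)$, i.e.\ $4$ when $\ell=2$ and $10$ when $\ell\ge 3$. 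This is what lets the paper conclude that either $c(a)\ge 8$ outright (two merge vertices, or one with $\ell\ge3$) or there is a unique merge vertex with exactly two parents. Second, even with the corrected credit of $4$, the case of a unique merge vertex with $\ell=2$ is not settled by ``deeper reconvergence'': in that case every other non-root vertex has a unique parent and no horizontal neighbour, which forces $G$ to be an even cycle $C_{2p}$, where the merge-vertex bookkeeping only yields $4$. One must then compute $c(a)$ on $C_{2p}$ directly (it equals $p^2$, so $\ge 9$ for $p\ge 3$), and $p=2$ is exactly the $C_4$ exception. Your plan would fail on $C_6$ as written. With these two repairs the argument becomes the paper's proof.
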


\begin{proof}
	To prove the theorem, we will use analogous approach as in the proof of Lemma~\ref{lem:disconnectivity}. 
	Suppose, to the contrary, that $a$ is a vertex of $G$ with $c(a) \le 7$.
	Consider a breadth-first-search tree of $G$ rooted at $a$, with the levels $N_0,\dots,N_p$.

	Observe first that since $G$ is bipartite, there are no two vertices
	$u,v$ with $N[u]\subseteq N[v]$, no two vertices in a common level $N_i$
	are adjacent, and $p \ge 2$. Let $x\in N_p$. Then $x$ has no neighbor in $N_p$.
	Let $y_1,\dots,y_\ell$ be the neighbors of $x$ in the previous level.
	Since $G$ is $2$-connected, $\ell\ge 2$.
	Let $z_i$ be a neighbor of $y_i$ in the level $f(x)-2$.
	Note that $z_i$ might be equal to $z_j$ for some $i\neq j$.
	Anyway, the edges $z_iy_i$ and $y_ix$ are good for $\{a,x\}$, which gives
	$g(a,x) - d(a,x) \ge 2(\ell-1) \ge 2$.
	Additionally, every edge $xy_j$ is good for $\{a,y_i\}$, if $i \neq j$, since $y_iy_j\notin E$.
	Thus, $g(a,y_i) - d(a,y_i) \ge \ell - 1 \ge 1$ for every $i$.
	The contribution of $a$ is thus at least $4$, and at least $10$ if $\ell \ge 3$.
	It follows that $x$ is the only vertex with two neighbors in the previous
	level, and moreover, it has precisely two such neighbors. Consequently $N_p=\{x\}$.

	Consider now the levels $N_0,\dots,N_{p-1}$.
	Since $G[N_i]$ is a graph without edges and $x$ is the only vertex with at least
	two neighbors in the previous level, $G$ is isomorphic to a cycle.
	It easy to calculate that in the case when $p \ge 3$, the contribution of $a$ is at least $9$.
\end{proof}

From Theorem~\ref{th:ifbipartite} we also infer:

\begin{theorem}
	\label{cor:bip}
	Let $G$ be a connected graph which has a bipartite $2$-connected block distinct from $C_4$, 
	or at least two blocks isomorphic to $C_4$. Then 
	$$
		\eta(G) \geq 4n.
	$$ 	
\end{theorem}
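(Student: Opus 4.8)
The plan is to reduce this statement directly to Theorem~\ref{th:ifbipartite} via the block-decomposition machinery of Lemma~\ref{th:blockdecomposition}, mirroring exactly how Theorem~\ref{th:girth} was derived from Lemma~\ref{lem:disconnectivity}. The key observation is that Lemma~\ref{th:blockdecomposition} lets us lower-bound $c_G(u)$ for \emph{every} vertex $u$ by summing per-block contribution bounds $p_i$, and then conclude $\eta(G) \geq \tfrac{n}{2}\sum_i p_i$. So the whole task is to produce a uniform lower bound $c_G(u) \geq 8$, which yields $\eta(G) \geq \tfrac{n}{2}\cdot 8 = 4n$ as required.

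First I would invoke Theorem~\ref{th:ifbipartite} to extract the two relevant per-block bounds. If $B$ is a bipartite $2$-connected block not isomorphic to $C_4$, then by Theorem~\ref{th:ifbipartite} we have $c_B(u) \geq 8$ for every vertex $u \in V(B)$. On the other hand, if $B$ is isomorphic to $C_4$, a direct computation (analogous to the $C_5$ case used in the proof of Theorem~\ref{th:girth}) gives $c_{C_4}(u) = 4$ for every vertex $u$ of $C_4$: in $C_4 = abcd$, the only pairs with $\eta(a,b) > 0$ are the two diagonals, and for the antipodal vertex we get $g - d = 2$, contributing $4$ to each vertex. Thus the honest per-block constant for a $C_4$ block is $p = 4$, while any non-$C_4$ bipartite $2$-connected block supplies $p = 8$.

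Next I would split into the two hypotheses of the theorem. In the first case, $G$ contains a bipartite $2$-connected block $B$ distinct from $C_4$; applying Lemma~\ref{th:blockdecomposition} with $p_i = 8$ for this block and $p_i \geq 0$ for all remaining blocks gives $c_G(u) \geq 8$ for every vertex $u$, hence $\eta(G) \geq 4n$. In the second case, $G$ has at least two blocks isomorphic to $C_4$; taking $p_i = 4$ for each of these two blocks and $p_i \geq 0$ elsewhere, Lemma~\ref{th:blockdecomposition} again yields $c_G(u) \geq 4 + 4 = 8$ for every $u$, and therefore $\eta(G) \geq 4n$.

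I do not expect a genuine obstacle here, since the heavy lifting is already packaged in Theorem~\ref{th:ifbipartite} and Lemma~\ref{th:blockdecomposition}; the only point requiring minor care is the bookkeeping in the second case, namely confirming that Lemma~\ref{th:blockdecomposition} allows us to assign the bound $p_i = 4$ to \emph{two different} $C_4$ blocks simultaneously and add them. This is exactly what the lemma's summation $c_G(u) \geq \sum_i p_i$ provides, because the contributions of distinct blocks to a fixed vertex $u$ accumulate additively (good edges for pairs across a cut-vertex transfer to $u$, as established in the proof of Lemma~\ref{th:blockdecomposition}). Verifying the elementary identity $c_{C_4}(u) = 4$ is the single routine calculation left to the reader.
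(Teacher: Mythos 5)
Your proposal is correct and follows essentially the same route as the paper: invoke Theorem~\ref{th:ifbipartite} for the bound $c_B(u)\ge 8$ on any bipartite $2$-connected block other than $C_4$, note $c_{C_4}(u)=4$, and combine the per-block constants via Lemma~\ref{th:blockdecomposition} to get $c_G(u)\ge 8$ and hence $\eta(G)\ge 4n$ in both cases of the hypothesis. One small slip in your routine verification: in $C_4=abcd$ the adjacent pairs also satisfy $\eta(a,b)=1$ (e.g.\ the edge $dc$ is good for $\{a,b\}$), so $c_{C_4}(u)=1+1+2=4$ rather than coming solely from the diagonal pair; the constant $4$ you use is nevertheless correct, so the argument stands.
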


\begin{proof}
	If $B$ is a block inducing a bipartite subgraph other than $C_4$, then
	$c_B(u)\ge 8$ for every vertex $u\in V(B)$, by Theorem~{\ref{th:ifbipartite}}.
	On the other hand, $c_{C_4}(u)\ge 4$ for every vertex $u$ of $C_4$.
	Consequently by Lemma~{\ref{th:blockdecomposition}}, $c_G(u)\ge 8$ for
	every vertex $u$ in a graph $G$ satisfying the assumptions of corollary,
	which means that $\eta(G)\ge 4n$.
\end{proof}

Similarly as above, we can derive Theorem~{\ref{con:bip}} from Theorem~{\ref{cor:bip}}.

\begin{proof}[Alternative proof of Theorem~\ref{con:bip}.]
	By Theorem~\ref{cor:bip}, we have
	$$
		\eta(G) \geq 4n > 4n - 8,
	$$
	for every bipartite connected graph $G$ with at least one $2$-connected block distinct from $C_4$, or
	at least two blocks isomorphic to $C_4$. So, we may assume that $G$ is a bipartite connected (unicyclic) graph with precisely one 
	$2$-connected block isomorphic to $C_4$.

	Let $C = abcd$ be the $C_4$ in $G$, and let $T_x$, $x \in \set{a,b,c,d}$, be the component of $G \setminus E(C)$	
	containing $x$. Denote also $t_x = |V(T_x)|$. Again, $g(u,v)-d(u,v) = 0$ for every pair $u,v \in V(T_x)$.	
	For any pair of adjacent vertices $x,y \in V(C)$, it holds $g(u,v)-d(u,v) = 1$ for every $u \in V(T_x)$ and $v \in V(T_y)$.
	If $d(x,y) = 2$, then $g(u,v)-d(u,v) = 2$ for every $u \in V(T_x)$ and $v \in V(T_y)$. Thus,
	\begin{align}
		\label{eq:bip}
		\eta(G) & = \Sz(G) - W(G) = t_a t_b + t_b t_c + t_c t_d + t_d t_a + 2 (t_a t_c + t_b t_d) \nonumber \\
				& =\frac{n^2-(t_a-t_c)^2-(t_b-t_d)^2}2.
	\end{align}
	It is straightforward to verify that the minimum value of~\eqref{eq:bip} is achieved in the case when at most one tree $T_x$, $x \in \set{a,b,c,d}$, has 
	more than one vertex. This completes the proof.
\end{proof}


\section{Revised Szeged index}
	\label{sec:revised}

In this section we consider a variation of the Szeged index.
The \textit{revised Szeged index} is defined as
$$
	\Sz^*(G) = \sum_{uv \in E}\Big(n_{uv}(u)+\frac{n_0(u,v)}2\Big)\Big(n_{uv}(v)+\frac{n_0(u,v)}2\Big),
$$ 
where $n_0(u,v) = n - n_{uv}(u) - n_{uv}(v)$.
Similarly as above, we define
$$
	\eta^*(G) = \Sz^*(G) - W(G).
$$

For every vertex $a \in V$, we set $h(a)$ to be the number of edges horizontal to $a$. Note that
$$
	\sum_{a \in V} h(a) = \sum_{uv \in E} n_0(u,v).
$$
Clearly, in every bipartite graph we have $\eta^*(G)=\eta(G)$. We therefore focus on non-bipartite graphs exclusively. 
Note that in a non-bipartite graph, it holds that $h(a) \geq 1$ for every vertex $a$.

A conjecture about the difference between the revised Szeged index and the Wiener index of a graph 
has also been proposed by AutoGraphiX~\cite{Han10} and confirmed by Chen et al.~\cite{CheLiLiu14}.

\begin{theorem}[Chen et al., 2014]
	\label{conj:revised}
	Let $G$ be a non-bipartite connected graph. Then 
	$$
		\eta^*(G) \geq \frac{n^2}{4}+n-\frac{3}{2}.
	$$
	Moreover, the equality is attained when the graph is composed of $C_3$ and a tree rooted at a vertex of $C_3$.
\end{theorem}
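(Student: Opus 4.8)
The plan is to convert the revised index into the ordinary one plus an explicit correction term coming from horizontal edges, and then to bound that correction term. Writing $p=n_{uv}(u)$, $q=n_{uv}(v)$ and $n_0=n_0(u,v)=n-p-q$ for an edge $uv$, a direct expansion gives
\[
\Big(p+\tfrac{n_0}{2}\Big)\Big(q+\tfrac{n_0}{2}\Big)=pq+\frac{n\,n_0}{2}-\frac{n_0^2}{4},
\]
so that, summing over all edges,
\[
\eta^*(G)=\eta(G)+\frac14\sum_{uv\in E}n_0(u,v)\big(2n-n_0(u,v)\big).
\]
I would record this identity first, since it isolates exactly the quantity responsible for the quadratic growth of $\eta^*$.

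Since $\eta(G)\ge 0$ for every connected graph, it suffices to bound $D(G):=\tfrac14\sum_{uv}n_0(u,v)\big(2n-n_0(u,v)\big)$ from below. This reduction cannot afford any slack in the extremal case: when $G$ is $C_3$ with a tree attached, $G$ is a block graph, hence $\eta(G)=0$, so the entire bound must be produced by $D(G)$ alone. I would therefore aim to prove $D(G)\ge \tfrac{n^2}{4}+n-\tfrac32$ directly for every non-bipartite connected $G$, and then read off the equality case.

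For the lower bound on $D(G)$ I would charge each vertex to the horizontal edges it sees. Since $\sum_{uv}n_0(u,v)=\sum_{a}h(a)$ and the excerpt records $h(a)\ge 1$ in a non-bipartite graph, while $n_0(u,v)\le n-2$ forces $2n-n_0(u,v)\ge n+2$, the crude estimate
\[
\sum_{uv\in E}n_0(2n-n_0)=\sum_{a\in V}\ \sum_{\substack{uv\in E\\ uv\text{ horizontal to }a}}\!\!(2n-n_0)\ \ge\ (n+2)\sum_{a}h(a)\ \ge\ n(n+2)
\]
already yields $D(G)\ge \tfrac{n^2}{4}+\tfrac{n}{2}$. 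The hard part is the remaining additive gap of $\tfrac n2-\tfrac32$, that is, producing an excess of $2n-6$ over the crude count. This excess is exactly what the odd cycle contributes through edges with small $n_0$: an edge horizontal to only a few vertices has $2n-n_0$ much larger than $n+2$, so the difficulty is to show that any non-bipartite graph contains enough such lightly loaded horizontal edges. Handling this uniformly across all odd girths is the main obstacle. When the odd girth is at least $5$, a shortest (hence isometric) odd cycle already supplies at least five edges, each horizontal to a nearly antipodal vertex, which gives far more than $2n-6$; the genuinely tight situation is the triangle.

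To finish, and to pin down equality, I would combine the bound on $D(G)$ with the contribution estimates of Section~\ref{sec:add} to eliminate every graph except those reducing to a single odd cycle with trees hung on it, exactly as in the alternative proofs of Theorems~\ref{con:girth} and~\ref{con:bip}. For such a graph $D(G)$ becomes an explicit quadratic in the sizes $t_x$ of the attached trees; for the triangle $abc$ one obtains
\[
\sum_{uv}n_0(2n-n_0)=2n^2-\big(t_a^2+t_b^2+t_c^2\big),\qquad t_a+t_b+t_c=n,
\]
which, over $t_x\ge 1$, is minimized at the most unbalanced split $(n-2,1,1)$, yielding precisely $\tfrac{n^2}{4}+n-\tfrac32$. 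Every longer odd cycle and every additional horizontal edge strictly increases $D(G)$, so equality holds for $G$ equal to $C_3$ with a tree rooted at one of its vertices, as claimed.
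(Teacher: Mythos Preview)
Your decomposition $\eta^*(G)=\eta(G)+D(G)$ and the crude bound $D(G)\ge \tfrac{n^2}{4}+\tfrac{n}{2}$ are exactly the content of the paper's Lemma~\ref{th:revised}, and your endgame computation for the $C_3$-plus-tree case is correct and matches the paper's alternative proof. So the skeleton is right.

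The gap is in the middle. You announce that it suffices to prove $D(G)\ge \tfrac{n^2}{4}+n-\tfrac32$ outright, but you never do this, and your heuristic for odd girth $\ge 5$ is not valid as stated: the edges of a shortest odd cycle need not have small $n_0$ in a general graph, so they need not contribute any ``excess'' $n_0(n-2-n_0)$ at all. You then silently change strategy and propose to combine with the $c(a)$-estimates of Section~\ref{sec:add}, which is indeed how the paper proceeds, but you do not say what the combination is. The paper's key move (Theorem~\ref{cor:treetriangle}) is to observe that if $\eta^*(G)<\tfrac{n^2}{4}+n$ then Lemma~\ref{th:revised} forces \emph{two simultaneous} constraints: $\eta(G)<\tfrac n2$ and $\sum_a\bigl(h(a)-1\bigr)\le 1$. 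The second rules out diamonds and two edge-disjoint odd cycles, hence at most one triangle; the first, via Lemma~\ref{th:blockdecomposition} and Lemma~\ref{lem:disconnectivity}, forbids any $2$-connected block without a triangle. Together these force a single block equal to $C_3$ with trees attached. Your proposal points toward this argument but does not isolate either constraint, and the reference to ``exactly as in the alternative proofs of Theorems~\ref{con:girth} and~\ref{con:bip}'' is misleading: those proofs \emph{start} from a unicyclic graph, whereas here the reduction to that situation is the substance.
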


Recently, Klav\v{z}ar and Nadjafi-Arani~\cite{KlaNad14} extended the result in a way that it takes the number of edges into account. 
Here we present a similar result.

\begin{lemma}
	\label{th:revised}
	Let $G$ be a non-bipartite connected graph. Then
	$$
		\eta^*(G) \geq \eta(G)+\frac{n^2}{4}+\frac{n}2.
	$$
	More precisely, we have 
	$$
		\eta^*(G) \geq \eta(G)+\frac{n^2}{4}+\frac{n}2+\frac{n+2}{4}\cdot \sum_{a\in V} \big(h(a)-1 \big).
	$$
\end{lemma}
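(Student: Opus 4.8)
The plan is to work with the difference $\eta^*(G)-\eta(G)$ directly. Since both indices subtract the same Wiener term, we have $\eta^*(G)-\eta(G)=\Sz^*(G)-\Sz(G)$, so it suffices to analyze the latter edge by edge. Fix an edge $uv$ and write $c=n_0(u,v)$, $a=n_{uv}(u)$, $b=n_{uv}(v)$, so that $a+b+c=n$. Expanding the summand in the definition of $\Sz^*$ and subtracting the corresponding summand of $\Sz$ gives $(a+\tfrac c2)(b+\tfrac c2)-ab=\tfrac{c(a+b)}2+\tfrac{c^2}4=\tfrac{c(n-c)}2+\tfrac{c^2}4=\tfrac{nc}2-\tfrac{c^2}4$. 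Summing over all edges yields the clean identity $\eta^*(G)-\eta(G)=\sum_{uv\in E}\big(\tfrac{n\,n_0(u,v)}2-\tfrac{n_0(u,v)^2}4\big)$, which will be the starting point for both bounds.

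Next I would simplify the target. Using $\sum_{a\in V}h(a)=\sum_{uv\in E}n_0(u,v)$, a direct computation shows that the right-hand side of the precise inequality, minus $\eta(G)$, collapses to $\tfrac{n+2}4\sum_{uv\in E}n_0(u,v)$; indeed the terms $\tfrac{n^2}4$ and $\tfrac n2$ cancel exactly against the $-\tfrac{n+2}4\cdot n$ coming from the $-1$ summands in $\sum_{a}(h(a)-1)$. Thus the precise statement is equivalent to the edge-summed inequality $\sum_{uv\in E}\big(\tfrac{n\,n_0(u,v)}2-\tfrac{n_0(u,v)^2}4\big)\ge\tfrac{n+2}4\sum_{uv\in E}n_0(u,v)$, which I would prove edge by edge.

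The heart of the argument is then the elementary per-edge estimate: for $c=n_0(u,v)$ I claim $\tfrac{nc}2-\tfrac{c^2}4\ge\tfrac{(n+2)c}4$. Clearing denominators, this is equivalent to $c(n-c-2)\ge 0$. This holds because $c\ge 0$ and, since $n_{uv}(u)\ge 1$ and $n_{uv}(v)\ge 1$ (both endpoints are strictly closer to themselves), we have $c=n-n_{uv}(u)-n_{uv}(v)\le n-2$, so $n-c-2\ge 0$. Summing this per-edge inequality over all edges establishes the precise bound.

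Finally, the coarser bound $\eta^*(G)-\eta(G)\ge\tfrac{n^2}4+\tfrac n2$ drops out immediately: since $G$ is non-bipartite, every vertex satisfies $h(a)\ge 1$, so $\sum_{a\in V}(h(a)-1)\ge 0$ and the last term in the precise inequality is nonnegative. I expect no serious obstacle here; the only point requiring care is the bound $n_0(u,v)\le n-2$, which is exactly what makes the per-edge quadratic $c(n-c-2)$ nonnegative and thereby keeps the whole estimate both tight and purely local.
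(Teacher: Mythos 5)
Your proposal is correct and follows essentially the same route as the paper: the same per-edge expansion giving $\Sz^*(G)-\Sz(G)=\sum_{uv\in E}\bigl(\tfrac{n\,n_0(u,v)}{2}-\tfrac{n_0(u,v)^2}{4}\bigr)$, the same key bound $n_0(u,v)\le n-2$, and the same use of $h(a)\ge 1$ for non-bipartite graphs. The only difference is presentational (you verify an equivalent edge-summed inequality rather than deriving the chain forward), so there is nothing to add.
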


\begin{proof}
	We have
	\begin{align*}
		\eta^*(G) = & \ \Sz^*(G) - W(G) \\
		= & \sum_{uv \in E}\Big(n_{uv}(u)+\frac{n_0(u,v)}2\Big)\Big(n_{uv}(v)+\frac{n_0(u,v)}2\Big)-\sum_{\{u, v\} \subseteq V}d(u,v)\\
		= & \sum_{uv \in E}\Big(n_{uv}(u)n_{uv}(v)+\frac{n_0(u,v)}2\big(n_{uv}(v)+n_{uv}(u)+n_0(u,v)\big)-\frac{n_0(u,v)^2}4\Big)-\sum_{\{u, v\} \subseteq V}d(u,v)\\
		= & \sum_{uv \in E}\big(n_{uv}(u)n_{uv}(v)\big)-\sum_{\{u, v\} \subseteq V}d(u,v)+\sum_{uv \in E}\Big(\frac{n_0(u,v)\cdot n}2-\frac{n_0(u,v)^2}4 \Big)\\
		= & \ \eta(G)+\frac12\sum_{uv \in E}n_0(u,v)\Big(n-\frac{n_0(u,v)}2\Big).
	\end{align*}
	Note that $n_0(u,v)\leq n-2$ for any edge $uv$, and so
	\begin{align*}
		\eta^*(G) \geq & \ \eta(G)+\frac12\sum_{uv \in E} n_0(u,v) \Big(n-\frac{n-2}2 \Big)\\ 
		= & \ \eta(G)+\frac{n+2}4\sum_{uv \in E}n_0(u,v)\\ 
		= & \ \eta(G)+\frac{n+2}4\sum_{a \in V}h(a).
	\end{align*}		
	Recall that $h(a) \geq 1$ for any vertex $a$. Thus,
	\begin{align*}
		\eta^*(G) \ge & \ \eta(G)+\frac{(n+2)\cdot n}4+ \frac{n+2}4\sum_{a \in V}\big(h(a)-1\big).
	\end{align*}
\end{proof}

We use Lemma~\ref{th:revised} to prove the following stronger version of Theorem~\ref{conj:revised}.

\begin{theorem}
	\label{cor:treetriangle}
	Let $G$ be a non-bipartite connected graph.
	Then
	$$
		\eta^*(G) \geq \frac{n^2}{4} + n,
	$$
	unless $G$ is obtained from a tree by expanding a single vertex into $C_3$.
\end{theorem}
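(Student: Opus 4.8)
The plan is to start from the exact identity established in the proof of Lemma~\ref{th:revised}, namely
\[
	\eta^*(G) = \eta(G) + \frac12\sum_{uv\in E} n_0(u,v)\Big(n-\frac{n_0(u,v)}2\Big),
\]
and to split the argument according to the total horizontal mass $S=\sum_{a\in V} h(a)=\sum_{uv\in E} n_0(u,v)$. Since $G$ is non-bipartite, $h(a)\ge 1$ for every $a$, so $S\ge n$. If $S\ge n+2$, then Lemma~\ref{th:revised} already gives $\eta^*(G)\ge \eta(G)+\frac{n^2}4+\frac n2+\frac{n+2}4(S-n)\ge \frac{n^2}4+n+1$, and we are done. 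Thus the whole difficulty is concentrated in the regime $S\le n+1$, i.e.\ at most one vertex has $h(a)=2$ and all others have $h(a)=1$.

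I would next dispose of the triangle-free case (which in fact needs no hypothesis on $S$). If $G$ is triangle-free and non-bipartite, then either it has a triangle-free $2$-connected block distinct from $C_5$ or at least two blocks isomorphic to $C_5$, in which case $\eta(G)\ge 2n$ by Theorem~\ref{th:girth}; or else $G$ is unicyclic with its unique cycle a $C_5$, in which case $\eta(G)\ge 2n-5$ by Theorem~\ref{con:girth}. In every case $\eta(G)\ge \frac n2$, so the weak form of Lemma~\ref{th:revised} gives $\eta^*(G)\ge \eta(G)+\frac{n^2}4+\frac n2\ge \frac{n^2}4+n$.

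It remains to treat a graph $G$ containing a triangle $T=pqr$ with $S\le n+1$. The key observation is that for any vertex $x$ the three distances $d(x,p),d(x,q),d(x,r)$ pairwise differ by at most $1$, so at least two of them coincide and every vertex is counted by at least one of $a:=n_0(pq)$, $b:=n_0(qr)$, $c:=n_0(pr)$. A vertex equidistant from all three of $p,q,r$ is counted three times and every other vertex exactly once, so $a+b+c=n+2k$ where $k$ counts the all-equidistant vertices. Since $a+b+c\le S\le n+1$ and $a+b+c\equiv n\pmod 2$, we get $k=0$, hence $a+b+c=n$, while each of $a,b,c$ is at most $n-2$. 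Keeping only the three triangle edges in the identity above (the rest contribute non-negatively) gives
\[
	\eta^*(G)\ \ge\ \eta(G)+\frac{n^2}2-\frac14(a^2+b^2+c^2)\ =\ \eta(G)+\frac{n^2}4+\frac12(ab+bc+ca).
\]
A short optimization shows $ab+bc+ca\ge 2n-4$ for integers $a,b,c\ge 0$ with $a+b+c=n$ and $\max(a,b,c)\le n-2$ (the minimum is at $(n-2,2,0)$), so $\eta^*(G)\ge \eta(G)+\frac{n^2}4+(n-2)$, and it suffices to show $\eta(G)\ge 2$. If $G$ is not a block graph, some block $B$ is $2$-connected and non-complete, hence on at least $4$ vertices, so $\eta(G)\ge \eta(B)\ge 2|V(B)|-6\ge 2$ by Theorem~\ref{th:main1}. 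If $G$ is a block graph, then $h(a)=\sum_K\binom{|K|-1}2$ (the sum over clique blocks $K$) is independent of $a$, so $S=n\sum_K\binom{|K|-1}2$; the bound $S\le n+1$ forces exactly one triangular block and no larger clique, i.e.\ $G$ is a triangle with trees attached — precisely the excluded family.

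The main obstacle is this triangle subcase, specifically the claim that under $S\le n+1$ the edges of $T$ carry essentially all the horizontal mass: the parity identity $a+b+c=n+2k$ is what rules out both vertices equidistant from all of $p,q,r$ and any competing horizontal edge, and once it is in place the computation is forced. A secondary point is the value $S=n+1$, where a single extra horizontal edge of weight $n_0=1$ appears; it contributes $\tfrac{n^2-(n-1)^2}4>0$ and only helps, but one must still invoke the congruence to conclude $k=0$. The small orders where a non-block graph with a triangle would otherwise be delicate (such as $K_4^2$) automatically satisfy $S\ge n+2$ and are absorbed into the first case, so they need no separate treatment.
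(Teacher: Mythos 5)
Your proof is correct, but the core of your argument is genuinely different from the paper's. Both proofs open the same way: apply the refined form of Lemma~\ref{th:revised} to reduce to the regime $S=\sum_{a}h(a)\le n+1$, and both dispose of the triangle-free case by showing $\eta(G)\ge n/2$ there (you via Theorem~\ref{th:girth} together with the cited Theorem~\ref{con:girth} for the unicyclic $C_5$ case; the paper via $c_{C_5}(u)\ge 1$ and Lemmas~\ref{th:blockdecomposition} and~\ref{lem:disconnectivity} only --- a purely internal route you could also take by noting $c_{C_5}(u)\ge 2$). The divergence is in the triangle case. The paper stays structural: it deduces from $S\le n+1$ that $G$ has at most one triangle and no triangle-free $2$-connected block, hence a unique $2$-connected block $B$ containing a triangle; it then finds a vertex $a$ with $c_B(a)=0$, uses a breadth-first-search argument to show $a$ must dominate $B$, and concludes $B\cong C_3$. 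You instead work directly with the horizontal masses of the three triangle edges: the counting identity $n_0(pq)+n_0(qr)+n_0(pr)=n+2k$ combined with $S\le n+1$ forces $k=0$ and $a+b+c=n$, after which the explicit optimization of $ab+bc+ca$ over the polytope (correct --- the minimum over $\{a+b+c=n,\ 0\le a,b,c\le n-2\}$ is $2n-4$ at a vertex, since $-(ab+bc+ca)$ is convex on that plane, and your actual point even satisfies $a,b,c\ge 1$) yields $\eta^*(G)\ge\eta(G)+\tfrac{n^2}{4}+n-2$, reducing everything to $\eta(G)\ge 2$, which Theorem~\ref{th:main1} supplies unless $G$ is a block graph, and block graphs with $S\le n+1$ are exactly the excluded family. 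Your route buys a quantitative intermediate inequality and avoids the paper's BFS/dominating-vertex analysis of the unique $2$-connected block; the paper's route buys finer structural information (that the block is literally $C_3$) without any optimization step and without leaning on Chen et al.'s theorem. Both are sound; yours is arguably the more self-contained argument in the triangle case and the less self-contained one in the triangle-free case.
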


\begin{proof}
	Suppose, to the contrary, that $G$ is a non-bipartite graph with $\eta^*(G) < \frac{n^2}{4} + n$.
	By Lemma~\ref{th:revised}, there is neither a vertex $a$ with
	$h(a) \geq 3$, nor two vertices $a,b$ with $h(a), h(b) \geq 2$. 
	In particular, $G$ does not contain a diamond (a $K_4$ without one edge) as
	a subgraph, and hence $G$ has no clique larger than a triangle.
	Similarly, the graph does not contain two edge-disjoint odd cycles.
	Therefore, there is at most one triangle in $G$.

	Additionally, it holds that $\eta(G) < \frac{n}2$.
	By Lemma~\ref{th:blockdecomposition}, no block satisfies  $c(a) \geq 1$
	for every vertex.
	Observe that every vertex $u$ of $C_5$ satisfies $c(u)=1$.
	Thus by Lemma~{\ref{lem:disconnectivity}}, $G$ does not contain
	$2$-connected block without a triangle.

	Hence, $G$ has a unique $2$-connected block $B$ and this block contains a
	triangle.
	Suppose that there is a vertex $a$ in $B$ satisfying $c_B(a) = 0$.
	Moreover, suppose that $a$ is not a dominating vertex in $B$.
	Then the breadth-first-search tree for $B$ rooted in $a$ contains at least three levels. 
	Since $B$ is $2$-connected, there must be in the last level either a vertex
	with two neighbors in the previous level or an edge whose endpoints are
	adjacent to two distinct neighbors in the previous level.
	Both these cases contradict to $c(a)=0$. Hence, $a$ is dominating in $B$.
	Since there is at most one triangle in $B$ and $B$ is $2$-connected, we conclude that $B$ is isomorphic to $C_3$.
\end{proof}

Finding $\eta^*(G)$ for all graphs $G$ obtained from a tree by expanding a single vertex into $C_3$ we obtain an alternative proof of Theorem~\ref{conj:revised}.

\begin{proof}[Alternative proof of Theorem~\ref{conj:revised}.]
	By Theorem~\ref{cor:treetriangle}, we have 
	$$
		\eta^*(G) \ge \frac{n^2}{4} + n > \frac{n^2}{4} + n - \frac{3}{2}
	$$
	for every connected non-bipartite graph $G$, unless $G$ is obtained from a tree by expanding a single vertex into $C_3$.
	So, we may assume that $G$ is a graph obtained from a tree by expanding one vertex into $C_3$.
	
	Let $C = abc$ be the $C_3$ in $G$ and let $T_x$, $x \in \set{a,b,c}$, be the component of $G \setminus E(C)$
	containing $x$. Denote also $t_x = |V(T_x)|$.
	Then
	\begin{align*}
		\eta^*(G) & = \Sz^*(G) - W(G) = \Sz^*(G) - \Sz(G) + \Sz(G) - W(G) = \Sz^*(G) - \Sz(G).
	\end{align*}
	The last equality holds as $G$ is a block graph. 
	Note also that $n_0(u,v) = 0$, unless $uv \in \set{ab,bc,ac}$, in which cases it holds
	$n_0(a,b) = t_c$, $n_0(b,c) = t_a$, and $n_0(a,c) = t_b$.
	Since $n_x(x,y)=t_x$ for $x,y\in\{a,b,c\}$, $x\ne y$, we have
	\begin{align}
		\label{eq:rev}
		\eta^*(G) & =  \sum_{uv \in E} \Bigg( \ \Bigg (n_u(uv) + \frac{n_0(u,v)}{2} \Bigg) \Bigg(n_v(uv) + \frac{n_0(u,v)}{2}\Bigg) - n_u(uv)n_v(uv) \ \Bigg) \nonumber \\
				  & =  \Big( \big(n_a(ab) + \tfrac{1}{2}t_c\big)\big(n_b(ab) + \tfrac{1}{2}t_c \big) - n_a(ab)n_b(ab) \Big) \nonumber \\
				  & +  \Big( \big(n_b(bc) + \tfrac{1}{2}t_a\big)\big(n_c(bc) + \tfrac{1}{2}t_a \big) - n_b(bc)n_c(bc) \Big) \nonumber \\
				  & +  \Big( \big(n_a(ac) + \tfrac{1}{2}t_b\big)\big(n_c(ac) + \tfrac{1}{2}t_b \big) - n_a(ac)n_c(ac) \Big) \nonumber \\
				  & = \tfrac{1}{4}(t_a^2 + t_b^2 + t_c^2) + t_a t_b+ t_a t_c + t_b t_c \nonumber \\
				  & = \tfrac 5{12}n^2 -\tfrac 1{12}\big( (t_a-t_b)^2+(t_b-t_c)^2+(t_c-t_a)^2 \big).
	\end{align}
	The minimum value of~\eqref{eq:rev} is achieved when at most one tree $T_x$, $x\in\{a,b,c\}$, has more than one vertex.
	This completes the proof.
\end{proof}

\bigskip
\noindent
{\bf Acknowledgment.} 
The second author acknowledges partial support by Slovak research grants VEGA 1/0007/14, VEGA 1/0065/13 and APVV 0136--12.
The research was partially supported by Slovenian research agency ARRS (program no. P1--0383 and project no. L1--4292), 
and by French agency CampusFrance (Proteus PHC project 31284QJ).

\bigskip


\bibliographystyle{plain}

\bibliography{mainBib}

\end{document}